\newcommand{\numberseries}{\bfseries}   
\newlength{\thmtopspace}                
\newlength{\thmbotspace}                
\newlength{\thmheadspace}               
\newlength{\thmindent}                  
\newtheoremstyle{fixed bf head,slanted body}
                {\thmtopspace}{\thmbotspace}{\slshape}
                {\thmindent}{\bfseries}{.}{\thmheadspace}
                {{\numberseries \thmnumber{#2\;}}\thmname{#1}\thmnote{ (#3)}}
\newtheoremstyle{variable bf head,slanted body}
                {\thmtopspace}{\thmbotspace}{\slshape}
                {\thmindent}{\bfseries}{.}{\thmheadspace}
                {{\numberseries \thmnumber{#2\;}}\thmname{#1}\thmnote{ #3}}
\newtheoremstyle{fixed bf head,upright body}
                {\thmtopspace}{\thmbotspace}{\upshape}
                {\thmindent}{\bfseries}{.}{\thmheadspace}
                {{\numberseries \thmnumber{#2\;}}\thmname{#1}\thmnote{ (#3)}}
\newtheoremstyle{numbered paragraph}
                {\thmtopspace}{\thmbotspace}{\upshape}
                {\thmindent}{\upshape}{}{\thmheadspace}
                {{\numberseries \thmnumber{#2.}}}
\DeclareMathOperator{\md}{\operatorname{\mathsf{mod}}}
\theoremstyle{plain} 
\newtheorem{thm}{Theorem}[section]
\newtheorem{prop}[thm]{Proposition}
\newtheorem{cor}[thm]{Corollary}
\newtheorem{lem}[thm]{Lemma}
\theoremstyle{definition}
\newtheorem{dfn}[thm]{Definition}
\newtheorem{obs}[thm]{Observation}
\newtheorem{eg}[thm]{Example}
\newtheorem{conj}[thm]{Conjecture}
\newtheorem{ques}[thm]{Question}
\newtheorem{rmk}[thm]{Remark}
\numberwithin{equation}{section}
\newcommand{\fm}{\mathfrak{m}}
\newcommand{\fp}{\mathfrak{p}}
\newcommand{\fq}{\mathfrak{q}}
\newcommand{\CC}{\mathbb{C}}
\newtheorem{chunk}[thm]{\hspace*{-1.065ex}\bf}
\DeclareMathOperator{\Ass}{Ass}
\def\G-dim{\operatorname{\mathsf{G-dim}}}
\def\pd{\operatorname{\mathsf{pd}}}
\def\depth{\operatorname{\mathsf{depth}}}
\def\edim{\operatorname{\mathsf{embdim}}}
\def\height{\operatorname{\mathsf{height}}}
\def\length{\operatorname{\mathsf{length}}}
\def\Hom{\operatorname{\mathsf{Hom}}}
\def\Tr{\mathsf{Tr}\hspace{0.01in}}
\def\Ext{\operatorname{\mathsf{Ext}}}
\def\Spec{\operatorname{\mathsf{Spec}}}
\def\Tor{\operatorname{\mathsf{Tor}}}
\def\dim{\operatorname{\mathsf{dim}}}
\def\m{\mathfrak{m}}
\def\p{\mathfrak{p}}
\def\e{\operatorname{\mathsf{e}}}
\DeclareMathOperator{\im}{im}
\DeclareMathOperator{\Soc}{Soc}
\DeclareMathOperator{\Supp}{Supp}
\def\E{\operatorname{E}}
\def\urltilda{\kern -.15em\lower .7ex\hbox{\~{}}\kern .04em}
\def\urldot{\kern -.10em.\kern -.10em}\def\urlhttp{http\kern -.10em\lower -.1ex
\hbox{:}\kern -.12em\lower 0ex\hbox{/}\kern -.18em\lower 0ex\hbox{/}}
\begin{document}
\baselineskip=15pt

\keywords{Integrally closed ideals, weakly $\fm$-full ideals, torsion in tensor products of modules}

\title[On the ideal case of a conjecture of Huneke and Wiegand]{On the ideal case of a conjecture of \\ Huneke and Wiegand}

\author{Olgur Celikbas}
\address{Department of Mathematics \\
West Virginia University \\
Morgantown, WV 26506 U.S.A}
\email{olgur.celikbas@math.wvu.edu}

\author{Shiro Goto}
\address{Department of Mathematics, School of Science and Technology, Meiji University, 1-1-1 Higashi-mita, Tama-ku, Kawasaki 214-8571, Japan}
\email{shirogoto@gmail.com}

\author{Ryo Takahashi}
\address{Graduate School of Mathematics, Nagoya University, Furocho, Chikusaku, Nagoya,
464-8602, Japan}
\email{takahashi@math.nagoya-u.ac.jp}
\urladdr{http://www.math.nagoya-u.ac.jp/~takahashi/}

\author{Naoki Taniguchi}
\address{Global Education Center, Waseda University, 1-6-1 Nishi-Waseda, Shinjuku-ku, Tokyo 169-8050, Japan}
\email{naoki.taniguchi@aoni.waseda.jp}
\urladdr{http://www.aoni.waseda.jp/naoki.taniguchi/}

\subjclass[2000]{13D07, 13C14, 13C15}

\setcounter{tocdepth}{1}

\begin{abstract} A conjecture of Huneke and Wiegand claims that, over one-dimensional commutative Noetherian local domains, the tensor product of a finitely generated, non-free, torsion-free module with its algebraic dual always has torsion. Building on a beautiful result of Corso, Huneke, Katz and Vasconcelos, we prove that the conjecture is affirmative for a large class of ideals over arbitrary one-dimensional local domains. Furthermore we study a higher dimensional analog of the conjecture for integrally closed ideals over Noetherian rings that are not necessarily local. We also consider a related question on the conjecture and give an affirmative answer for first syzygies of maximal Cohen-Macaulay modules.
\end{abstract}

\maketitle


\section{Introduction}

Throughout, unless otherwise stated, $R$ denotes a commutative Noetherian local ring with unique maximal ideal $\fm$ and residue field $k$. All $R$-modules are assumed to be finitely generated. For such an $R$-module $M$, we denote the algebraic dual $\Hom_R(M,R)$ of $M$ by $M^{\ast}$. 

Recall that an $R$-module $M$ is called \emph{torsion-free} if the natural map $M \to M\otimes_R Q$, where $Q$ is the total quotient ring of $R$, is injective. In general, if $M$ is \emph{torsionless}, i.e., if the natural map $M \to M^{\ast\ast}$ is injective, then $M$ must be torsion-free, but the converse may fail; see, for example, the paragraph following \cite[2.1]{CT2} for details.

In this paper we are concerned with the following long-standing conjecture of Huneke and Wiegand; see \cite[4.6 and the discussion following the proof of 5.2]{HW1}. 

\begin{conj} (Huneke and Wiegand \cite{HW1}) \label{conjHW} Let $R$ be a one-dimensional local ring and let $M$ be a finitely generated, torsion-free $R$-module. Assume $M$ has \emph{rank}, i.e., there is a nonnegative  integer $r$ such that $M_{\fp} \cong R_{\fp}^{\oplus r}$ for all associated prime ideals $\fp$ of $R$. If $M\otimes_{R}M^{\ast}$ is torsion-free, then $M$ is free.
\end{conj}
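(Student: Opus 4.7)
The plan is to reduce Conjecture~\ref{conjHW} to a statement about ideals in a one-dimensional local domain and then to extract freeness from an injectivity property of a natural evaluation map. Over a one-dimensional local ring, a torsion-free $R$-module $M$ of rank $r$ embeds in $R^r$, and freeness can in principle be checked rank-by-rank. By choosing an embedding $M\hookrightarrow R$ that identifies $M$ with a nonzero ideal $I$ of $R$ (after splitting off a maximal free summand and reducing modulo nilpotents to pass to the domain case), and by applying standard torsion-preserving short exact sequences, the problem should reduce to the following statement: \emph{if $I$ is a nonzero ideal of a one-dimensional local domain $R$ such that $I\otimes_R I^{\ast}$ is torsion-free, then $I$ is principal}. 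This reduction is already implicit in the original work of Huneke and Wiegand and explains why the present article concentrates on ideals.

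\medskip

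For a nonzero ideal $I\subset R$, I would identify $I^{\ast}$ with the fractional ideal $I^{-1}=(R:_{Q}I)$ and study the natural evaluation
\[
\mu\colon I\otimes_R I^{-1}\longrightarrow R,\qquad x\otimes f\longmapsto f(x).
\]
The image of $\mu$ is the product ideal $I\cdot I^{-1}$, which has rank one, so the kernel of $\mu$ coincides with the torsion submodule of $I\otimes_R I^{-1}$. The hypothesis therefore forces $\mu$ to be an isomorphism onto a fractional ideal, a very strong rigidity statement. This is precisely the type of conclusion treated by the Corso-Huneke-Katz-Vasconcelos theorem highlighted in the abstract, which handles integrally closed ideals; I would invoke it (or adapt its proof) to deduce that $I$ is principal, completing the argument at least in the integrally closed case.

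\medskip

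The last and hardest step is to pass from an arbitrary ideal $I$ to an integrally closed (or otherwise sufficiently rigid) ideal without losing torsion-freeness of the tensor product. The naive attempt --- replacing $I$ by its integral closure $\overline{I}$ and arguing that $I\otimes_R I^{-1}$ torsion-free forces $\overline{I}\otimes_R \overline{I}^{-1}$ torsion-free --- has no obvious justification, since torsion in tensor products behaves poorly under passage to integral closure or colon ideals. This is the obstacle that has kept the conjecture open for decades, and the abstract's mention of \emph{weakly $\fm$-full} ideals signals the authors' compromise: to isolate a class of ideals broad enough to contain the integrally closed ones but constrained enough for the CHKV-style rigidity to go through. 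I expect the main difficulty in a full proof of the conjecture to lie exactly in this final reduction.
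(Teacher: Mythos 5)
This statement is a \emph{conjecture}, not a theorem of the paper, and the paper does not prove it: the authors explicitly note that it is ``very much open, even for ideals over one-dimensional complete intersection domains of codimension at least two.'' What the paper actually establishes is the ideal case under the additional hypothesis that $I$ is weakly $\fm$-full (Proposition~\ref{propintro}, proved as Corollary~\ref{cor2}), together with a non-local reflexive analogue for integrally closed ideals (Theorem~\ref{th1}). You are right to recognize that the full conjecture cannot be proved by the techniques at hand, and you correctly locate the essential obstruction in the last paragraph: there is no known way to transfer torsion-freeness of $I\otimes_R I^{\ast}$ from an arbitrary ideal to its integral closure or to any other ``rigid'' replacement. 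That diagnosis matches the actual state of the art and the paper's own positioning.

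There are, however, two points in your sketch that are not sound as written. First, you assert that the conjecture ``reduces to the ideal case'' and that this reduction is ``implicit in the original work of Huneke and Wiegand.'' This is not a known reduction. A torsion-free module of rank $r$ embeds in $R^{\oplus r}$, but there is no elementary mechanism that collapses the rank-$r$ statement to the rank-$1$ (ideal) statement while preserving torsion-freeness of the tensor product with the dual; the paper's title (``On the \emph{ideal case}'') reflects that ideals are a genuine \emph{special case}, not an equivalent reformulation. Second, your argument that ``the image of $\mu$ is the product ideal $I\cdot I^{-1}$, which has rank one, so the kernel of $\mu$ coincides with the torsion submodule'' and hence ``$\mu$ is an isomorphism onto a fractional ideal'' is only a tautological restatement that $I\otimes_R I^{\ast}$ is torsion-free of rank one; by itself it yields no rigidity. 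The actual engine in the paper, Theorem~\ref{CHKV}, is a Tor-rigidity and pd-test statement: vanishing of $\Tor_t^R(M,R/I)$ for a weakly $\fm$-full $\fm$-primary ideal $I$ forces $\pd(M)<t$. This is then fed through a short exact sequence $0\to N\to F\to C\to 0$ (Observation~\ref{obswmf}) and a duality lemma (Lemma~\ref{l1}) to obtain freeness. The fractional-ideal evaluation map plays no role; the force comes from homological rigidity, not from analyzing the image of $\mu$. If you want to produce what the paper actually proves, the place to start is the socle argument in the proof of Theorem~\ref{CHKV}, which is where the weakly $\fm$-full hypothesis $\fm I:\fm=I$ is used to derive the contradiction.
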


Huneke and Wiegand \cite[3.1]{HW1} established Conjecture \ref{conjHW} over hypersurface domains, but in general the conjecture is very much open, even for ideals over one-dimensional complete intersection domains of codimension at least two; however see, for example,  \cite{GTNL} for some recent promising work on the conjecture. 

We shall point out that Conjecture \ref{conjHW} holds if and only if the following holds; see \cite[8.6]{CW} and Theorem \ref{aus}.

\begin{conj}  \label{conj2HW} Let $R$ be a local ring satisfying Serre's condition $(S_2)$, and let $M$ be a finitely generated, torsion-free $R$-module which has rank. If $M\otimes_{R}M^{\ast}$ is reflexive, then $M$ is free.
\end{conj}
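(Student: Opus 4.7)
\textbf{Plan for proving Conjecture \ref{conj2HW}.} The plan is to reduce to the one-dimensional case by localizing at primes of height one, apply Conjecture \ref{conjHW} at each such prime, and then propagate codimension-one local freeness to global freeness using the reflexivity hypothesis on $M \otimes_R M^*$. The question is local on $R$, so throughout I assume $R$ is a local ring satisfying $(S_2)$, $M$ is torsion-free of constant rank $r$, and $M \otimes_R M^*$ is reflexive.

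First, I would localize at an arbitrary prime $\fp \in \Spec R$ of height one. Since $R$ is Noetherian and $M$ is finitely generated, localization commutes with $\Hom_R(-, R)$ and with the tensor product, so $(M^*)_\fp = (M_\fp)^*$ and $(M \otimes_R M^*)_\fp = M_\fp \otimes_{R_\fp} (M_\fp)^*$. Reflexivity is preserved under localization, so $M_\fp \otimes_{R_\fp} (M_\fp)^*$ is reflexive, hence torsion-free, over $R_\fp$. Because $R_\fp$ has dimension one, Conjecture \ref{conjHW} applies and forces $M_\fp$ to be free of rank $r$. Combined with freeness at height-zero primes (inherited from the existence of rank for $M$), this shows $M$ is locally free on all primes of height $\leq 1$.

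Second, I would upgrade codimension-one local freeness to global freeness. Writing a minimal free presentation $R^m \to R^n \to M \to 0$ with first syzygy $\Omega M$, codimension-one local freeness makes $\Omega M$ locally free, hence torsion-free with the expected rank, at all primes of height $\leq 1$. I would then exploit the $(S_2)$-property of $M \otimes_R M^*$ via a depth formula, for instance through an Auslander--Bridger-type depth equality or an Auslander transpose computation, to conclude that $\Tor_1^R(M, M^*)$ vanishes at all primes of depth $\geq 2$ and hence globally. Once this Tor vanishing is in hand, together with codimension-one local freeness and the rank hypothesis, a rigidity argument in the spirit of Huneke and Wiegand forces $M$ itself to be free.

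The principal obstacle is the first step: invoking Conjecture \ref{conjHW} at every height-one prime is precisely the open one-dimensional problem the paper is trying to understand, so this plan establishes Conjecture \ref{conj2HW} only conditionally on Conjecture \ref{conjHW}. A secondary, more technical obstacle in the second step is translating the depth information from the reflexivity of $M \otimes_R M^*$ into an actual vanishing statement $\Tor_1^R(M, M^*) = 0$: in the general $(S_2)$ setting one lacks the Cohen--Macaulay rigidity tools that usually make such arguments go through, and handling this cleanly, without extra hypotheses on $R$ or $M$ (such as Cohen--Macaulayness or bounded codimension), is the subtle part of the plan.
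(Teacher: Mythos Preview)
This statement is labeled a \emph{conjecture} in the paper, not a theorem; the paper does not prove it unconditionally. What the paper does assert (in the introduction) is that Conjecture~\ref{conj2HW} is equivalent to Conjecture~\ref{conjHW}, citing \cite[8.6]{CW} and the result recorded as~\ref{aus}. So your plan, which establishes Conjecture~\ref{conj2HW} only conditionally on Conjecture~\ref{conjHW}, is exactly in line with what the paper claims, and you are right to flag the first step as the principal (and open) obstacle.

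Your first step coincides with the paper's: localize at height-one primes and apply Conjecture~\ref{conjHW} to deduce that $M_{\fp}$ is free for all $\fp$ of height at most one (freeness at minimal primes comes from the rank hypothesis). Your second step, however, is unnecessarily elaborate. Once $M$ is locally free in codimension $\le 1$, the paper simply invokes Auslander's result (recorded as~\ref{aus}, from \cite[3.3]{Au}): over a local $(S_2)$ ring, a torsion-free module that is locally free at all primes of height $\le 1$ and whose tensor product with its $R$-dual is reflexive must already be free. No syzygy computation, depth formula, or separate $\Tor$-vanishing/rigidity argument is required; the reflexivity of $M\otimes_R M^{\ast}$ is consumed directly by Auslander's theorem rather than being converted into $\Tor_1^R(M,M^{\ast})=0$. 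The ``subtle part'' you flag in your second step is therefore already handled by a single citation, and the extra hypotheses you worry about (Cohen--Macaulayness, bounded codimension) are not needed.
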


A result of Corso, Huneke, Katz and Vasconcelos \cite{CHKV} shows integrally closed $\fm$-primary ideals are Tor-rigid and $\pd$-test; see \ref{rt} and Theorem \ref{CHKV}. In particular Conjecture \ref{conjHW} is true for such ideals; see Corollary \ref{cor2}. On reading through the proof of their result, we discover that the integrally closed hypothesis can be replaced with a weaker assumption, namely the weakly $\fm$-full property; see \cite{CIST}. Although this observation merely follows from a slight modification of the argument given in \cite{CHKV}, the outcome pertaining to Conjecture \ref{conjHW} makes a significant difference: in general, it is quite difficult to determine whether a given ideal is integrally closed, even by using a computer software such as Macaulay2 \cite{GS}, but the weakly $\fm$-full property is easier to check. Furthermore examples of weakly $\fm$-full ideals are easy to construct. For example, if $J$ is an ideal of $R$, then $J:_R\fm$ is a weakly $\fm$-full ideal; see Remark \ref{RM}. In general, an integrally closed ideal is not necessarily weakly $\fm$-full; zero ideal in a field is such an example. However, if $R$ is a domain with infinite residue field, then each nonzero integrally closed ideal is weakly $\fm$-full; see \cite[2.4]{Goto1}. In Section 2 we record some of these observations and obtain:

\begin{prop} \label{propintro} Let $R$ be a one-dimensional local domain and let $I$ be a nonzero, proper ideal of $R$. Assume $I$ is weakly $\fm$-full. If $I\otimes_{R}I^{\ast}$ is torsion-free, then $I$ is principal and $R$ is a DVR.
\end{prop}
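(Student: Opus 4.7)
The plan is to first upgrade the hypothesis to $I^* \cong R$ using Theorem \ref{CHKV} (extended to weakly $\fm$-full ideals), then to show $I$ is principal via an induction on $\length_R(R/I)$, and finally to exploit weak $\fm$-fullness once more to force $R$ to be a DVR. Since $I$ and $I^*$ are torsion-free $R$-modules of rank one, the classical torsion-Tor lemma for one-dimensional local domains (cf.~\cite[1.8]{HW1}) gives that $I \otimes_R I^*$ being torsion-free forces $\Tor_1^R(I, I^*) = 0$. By the extension of Theorem \ref{CHKV} to weakly $\fm$-full ideals, $I$ is both Tor-rigid and pd-test: Tor-rigidity upgrades this vanishing to $\Tor_i^R(I, I^*) = 0$ for all $i \geq 1$, and pd-test then yields $\pd_R(I^*) < \infty$. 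Since $R$ is Cohen-Macaulay of dimension one and $I^*$ is torsion-free (hence of depth at least one), the Auslander-Buchsbaum formula forces $\pd_R(I^*) = 0$; thus $I^*$ is free, and since it has rank one, $I^* \cong R$.

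Next, I would identify $I^*$ with the fractional ideal $I^{-1}$ in $Q$; the isomorphism $I^* \cong R$ then says $I^{-1} = (1/r)R$ is principal as a fractional ideal, and hence $I^{**} = (r)$. Since $I$ is proper and $\fm^{-1} \supsetneq R$ (which follows by lifting any socle element of $R/xR$, where $x \in \fm$ is a nonzerodivisor), we have $I^{-1} \supseteq \fm^{-1} \supsetneq R$, so $I^{**} \neq R$ and $r \in \fm$. Writing $I = rJ$ with $J = I :_R r$, one checks that $J$ remains weakly $\fm$-full: if $s \in R$ satisfies $s\fm \subseteq \fm J$, then $rs \cdot \fm \subseteq r\fm J = \fm I$, so $rs \in (\fm I) :_R \fm = I = rJ$ by weak $\fm$-fullness of $I$, and cancelling the nonzerodivisor $r$ yields $s \in J$. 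Moreover $J \cong I$ via multiplication by $r$, so $J^* \cong R$. Because $r \in \fm$, the filtration $rJ \subseteq rR \subseteq R$ gives $\length_R(R/I) = \length_R(R/J) + \length_R(R/(r)) > \length_R(R/J)$, and by induction $J$ is principal; hence so is $I = rJ$. Write $I = (a)$ with $a \in \fm$.

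Finally, to conclude $R$ is a DVR, suppose it is not. Then $(\fm :_Q \fm) \supsetneq R$: otherwise $\fm\fm^{-1}$ is an ideal of $R$ either equal to $R$ (making $\fm$ invertible and hence principal, contradicting that $R$ is not a DVR) or contained in $\fm$, in which case $\fm^{-1} \subseteq (\fm :_Q \fm) = R$, contradicting $\fm^{-1} \supsetneq R$. Pick $q \in (\fm :_Q \fm) \setminus R$. Then $aq \in R$ (since $a \in \fm$ and $q\fm \subseteq \fm \subseteq R$) and $(aq)\fm \subseteq a\fm = \fm I$, so $aq \in (\fm I) :_R \fm = I = (a)$ by weak $\fm$-fullness. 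Cancellation by the nonzerodivisor $a$ forces $q \in R$, a contradiction. Hence $R$ is a DVR. The hard part will be the middle paragraph: one must verify carefully that the weakly $\fm$-full property transfers to $J = I :_R r$ under the reduction $I = rJ$ so that the length induction closes.
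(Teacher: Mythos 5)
Your overall strategy is correct but genuinely different from the paper's, so let me both flag one soft spot and compare the two routes.

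The one place where the argument is shaky is the very first move: the assertion that, over a one-dimensional local domain with $I$, $I^*$ torsion-free of rank one, the torsion-freeness of $I\otimes_R I^*$ \emph{directly} forces $\Tor_1^R(I,I^*)=0$ via a ``torsion-Tor lemma.'' I do not believe the cited statement gives this; in general one only gets, from a sequence $0\to I^*\to R\to C\to 0$ with $C$ torsion, an injection $\Tor_1^R(I,C)\hookrightarrow I\otimes_R I^*$, hence $\Tor_1^R(I,C)=0$ --- and then one must invoke the rigid-test property of $I$ on the module $C$ (not on $I^*$) to conclude $\pd_R(C)\le 1$, whence $I^*$, being a first syzygy of $C$, is free by Schanuel. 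The vanishing $\Tor_1^R(I,I^*)=0$ is true only \emph{a posteriori}, after $I^*$ is known to be free, so using it as a stepping stone is logically out of order. This is exactly the content of the paper's Observation~\ref{obswmf} and Corollary~\ref{maincorollary}; you should route your first paragraph through the cokernel $C$ rather than through $\Tor_1(I,I^*)$.

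Once $I^*\cong R$ is in hand, your second and third paragraphs diverge pleasantly from the paper. Where the paper passes from ``$I^*$ free'' to ``$I$ free'' via a general lemma on torsion-free modules and the Auslander transpose (Lemma~\ref{l1}), you run an induction on $\length_R(R/I)$ through the fractional-ideal factorization $I=rJ$ with $J=I:_Rr$, checking that weak $\fm$-fullness is inherited by $J$; this is correct (including the socle-lifting argument that $\fm^{-1}\supsetneq R$ and the length comparison). And where the paper deduces that $R$ is a DVR by feeding $\Tor_2^R(R/\fm,R/I)=0$ back into Theorem~\ref{CHKV} (Remark~\ref{rmk3}), you argue directly from $\fm:_Q\fm\supsetneq R$ together with $\fm I:\fm=I$, which is elementary and self-contained. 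Both of those steps are fine as written. So: patch the first paragraph to argue through $\Tor_1^R(I,C)=0$ and $\pd_R(C)\le 1$ rather than through $\Tor_1^R(I,I^*)=0$, and the rest of your proof stands as a legitimately different route to the same conclusion.
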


Recall that $\operatorname{Pic}R$ consists of the isomorphism classes of finitely generated projective $R$-modules $M$ such that $M_{\p} \cong R_{\p} $ for all $ \p \in \Spec(R)$; see, for example, \cite[11.3]{Ei}.  In Section 3, we prove the following as Theorem \ref{th1}, which is our first main result in this paper.

\begin{thm} \label{th1intro} Let $R$ be a Noetherian ring (not necessarily local) satisfying Serre's condition $(S_2)$, and let $I$ be an integrally closed ideal of $R$ of positive height. Then  $I$ is locally free of rank one (so represents a class in $\operatorname{Pic}R$) if and only if $I\otimes_RI^*$ is reflexive. Moreover, if either of the equivalent conditions holds, then a primary decomposition of $I$ is of the form $$I = \bigcap_{\p \in \Ass_R(R/I)}\p^{(n(\p))}$$ where, for each $\p \in \Ass_{R}(R/I)$, $n(\p)$ and $\fp^{(n(\p))}$ denote a positive integer and the symbolic power of $\fp$, respectively. 
\end{thm}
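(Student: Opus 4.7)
The plan is to establish the equivalence by localizing at every prime of $R$ and inducting on the height, with the primary decomposition falling out of the local structure. The direction $(\Rightarrow)$ is immediate: if $I$ is locally free of rank one, the natural evaluation $I\otimes_R I^\ast\to R$ is an isomorphism after localizing at every prime, so $I\otimes_R I^\ast$ is projective of rank one and in particular reflexive.

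For $(\Leftarrow)$, it suffices to show $I_\fp$ is free of rank one for every $\fp\in\Spec(R)$. The case $I\not\subseteq\fp$ is trivial, so assume $I\subseteq\fp$; then $\height\fp\geq 1$, and after localization $R_\fp$ is $(S_2)$, the ideal $I_\fp$ is integrally closed of positive height (integral closure commutes with localization), and $(I\otimes_R I^\ast)_\fp\cong I_\fp\otimes_{R_\fp}(I_\fp)^\ast$ remains reflexive over $R_\fp$. I induct on $\height\fp$. For the base case $\height\fp=1$, the ring $R_\fp$ is one-dimensional Cohen--Macaulay, $I_\fp$ is $\fp R_\fp$-primary, integrally closed, and $I_\fp\otimes(I_\fp)^\ast$ is torsion-free; the CHKV-type input encoded in Theorem~\ref{CHKV} and Proposition~\ref{propintro} then forces $R_\fp$ to be a DVR and $I_\fp$ to be principal. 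In the inductive step $\height\fp\geq 2$, the induction hypothesis makes the evaluation $\phi\colon I_\fp\otimes_{R_\fp}(I_\fp)^\ast\to R_\fp$ an isomorphism at every prime strictly below $\fp R_\fp$, so $\ker\phi$ and $\coker\phi$ have finite length over $R_\fp$. Reflexivity over the $(S_2)$ ring gives $\depth\bigl(I_\fp\otimes_{R_\fp}(I_\fp)^\ast\bigr)\geq 2$, which rules out a nonzero finite-length $\ker\phi$ (such a submodule would force $\fp R_\fp$ to be associated). The depth lemma applied to $0\to I_\fp\otimes_{R_\fp}(I_\fp)^\ast\to R_\fp\to\coker\phi\to 0$ then yields $\depth\coker\phi\geq \min\bigl(\depth(I_\fp\otimes_{R_\fp}(I_\fp)^\ast)-1,\depth R_\fp\bigr)\geq 1$, ruling out a nonzero finite-length $\coker\phi$ as well. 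Hence $\phi$ is an isomorphism, $I_\fp$ is invertible in $R_\fp$, and therefore free of rank one.

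Finally, the primary decomposition is essentially bookkeeping. Once $I$ is locally principal over the $(S_2)$ ring $R$, any $\fp\in\Ass_R(R/I)$ must have height one, because in the Cohen--Macaulay local ring $R_\fp$ a principal ideal generated by a regular element has only height-one associated primes. At such a $\fp$ the base case identifies $R_\fp$ with a DVR, so $I_\fp=(\fp R_\fp)^{n(\fp)}$ for a unique $n(\fp)\geq 1$; contraction produces the symbolic power $\fp^{(n(\fp))}$ as the $\fp$-primary component, and intersecting over $\Ass_R(R/I)$ yields the claimed decomposition. The main obstacle I expect is the inductive step for $\height\fp\geq 2$, where the combined reflexivity--depth bookkeeping is what finally forces both kernel and cokernel of the evaluation map to vanish; the base case is essentially a direct appeal to Proposition~\ref{propintro} (in the appropriate Cohen--Macaulay, not necessarily domain, generality supplied by Theorem~\ref{CHKV}).
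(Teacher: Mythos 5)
Your argument is correct, and in the base case (height one) it is the same as the paper's, but the handling of higher-height primes is genuinely different. The paper does not induct on height: after establishing $I_\fp\cong R_\fp$ at every prime of height at most one, it localizes at an arbitrary $\fq\in\Spec R$ and invokes Auslander's theorem (quoted in the paper as \ref{aus}, from \cite[3.3]{Au}) as a black box --- a local $(S_2)$ ring, a torsion-free module that is free in codimension one and has reflexive $M\otimes M^\ast$, is free --- to get $IR_\fq$ free in one stroke. Your induction on $\height\fp$, using finite length of $\ker\phi$ and $\coker\phi$ at the top prime together with $\depth\geq 2$ coming from reflexivity over the $(S_2)$ ring, is in effect a self-contained re-derivation of the special case of Auslander's lemma that is needed here; it makes the proof longer but removes the external citation, and the mechanism (depth lemma on the evaluation sequence) is exactly what is hiding inside Auslander's argument. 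The remaining pieces match the paper: both reduce the height-one case to the CHKV rigidity result (the paper via Corollary \ref{cor2}, you via Theorem \ref{CHKV}/Proposition \ref{propintro}), and both obtain the primary decomposition by showing each $\fp\in\Ass_R(R/I)$ has height one and $R_\fp$ is a DVR, then contracting $IR_\fp=(\fp R_\fp)^{n(\fp)}$.

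One small imprecision worth fixing: in the primary decomposition step you write that ``in the Cohen--Macaulay local ring $R_\fp$ a principal ideal generated by a regular element has only height-one associated primes.'' At that point you do not yet know $R_\fp$ is Cohen--Macaulay. The correct route is the one the paper's Lemma \ref{th1lem} takes: from $\fp\in\Ass_R(R/I)$ and $IR_\fp\cong R_\fp$ one gets $\depth R_\fp=1$; the $(S_2)$ hypothesis then forces $\dim R_\fp\leq 1$, so $\dim R_\fp=1$ and $R_\fp$ is a one-dimensional Cohen--Macaulay local ring. Only then is the DVR conclusion available. This is a phrasing slip rather than a gap --- the $(S_2)$ assumption you already carry supplies the needed step --- but as written the sentence assumes the conclusion.
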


We have already mentioned that Conjecture \ref{conjHW}, and hence Conjecture \ref{conj2HW}, holds for integrally closed $\fm$-primary ideals due to \cite{CHKV}. Hence we should highlight that we do not assume the ring in Theorem \ref{th1intro} is local.

In \cite{GTNL}, it was shown that Conjecture \ref{conjHW} fails if $R$ is a Cohen-Macaulay local ring with canonical module $\omega$ and one replaces the tensor product 
$M\otimes_{R}M^{\ast}$ with $M \otimes_{R} M^{\dagger}$, where $M^{\dagger}=\Hom_R(M,\omega)$. However, even if $M^{\ast}$ is replaced with $M^{\dagger}$, the conjecture still remains open when the ring in question is Cohen-Macaulay with \emph{minimal multiplicity}. Motivated by this fact, in the last section, we discuss a question related to Conjecture \ref{conjHW} and prove the following as our second main result; see Question \ref{qn2} and Theorem \ref{1}.

\begin{thm}\label{1intro}
Let $R$ be a Cohen-Macaulay local ring with a canonical module $\omega$. Assume $R$ has minimal multiplicity. If $M$ is a first syzygy of a maximal Cohen-Macaulay $R$-module and $M\otimes_RM^{\dagger}\cong \omega$, then $M\cong R$.
\end{thm}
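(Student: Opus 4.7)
The plan is as follows. Since $M$ is a first syzygy of an MCM module $N$, the depth lemma applied to the defining sequence $0\to M\to F\to N\to 0$ (with $F$ free) gives $\depth M\ge \dim R$, so $M$ itself is MCM. By passage to $R[t]_{\fm R[t]}$ and then to the completion, we may assume $R$ is complete with infinite residue field; this grants Krull--Schmidt for MCM modules and a minimal reduction $\bsx=(x_1,\ldots,x_d)$ of $\fm$ satisfying $\fm^2=\bsx\fm$. Decompose $M=R^a\oplus M'$ where $M'$ has no free summand; it suffices to show $a=1$ and $M'=0$.

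Using $M^\dagger=\omega^a\oplus(M')^\dagger$ together with the $\omega$-reflexivity $(M')^{\dagger\dagger}\cong M'$, expand
\[
\omega\;\cong\;M\otimes_R M^\dagger\;\cong\;\omega^{a^2}\oplus\bigl((M')^\dagger\bigr)^a\oplus(M'\otimes_R\omega)^a\oplus\bigl(M'\otimes_R(M')^\dagger\bigr).
\]
Because $\omega$ has local endomorphism ring $\Hom_R(\omega,\omega)\cong R$ it is indecomposable, and $\lambda(\omega/\bsx\omega)=e(\bsx,\omega)=e(R)$; Krull--Schmidt combined with the length comparison forces $a^2\le 1$. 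In the case $a=1$ each of the remaining direct summands of $\omega$ must vanish, so in particular $(M')^\dagger=0$, hence $M'\cong(M')^{\dagger\dagger}=0$ and $M\cong R$.

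It remains to rule out $a=0$. Because $M$ has no free summand, the given free cover $F\twoheadrightarrow N$ is automatically minimal (any extra free factor of $F$ would contribute a free summand to $M$), so $M\subseteq\fm F$. Since $N$ is MCM, $\bsx$ is $N$-regular and $\bsx F\cap M=\bsx M$; combined with $\fm M\subseteq\fm^2 F=\bsx\fm F\subseteq\bsx F$, this yields $\fm M=\bsx M$, so $M$ is Ulrich. Then $M/\bsx M\cong k^{\mu(M)}$ is annihilated by $\fm$, and therefore so is
\[
(M\otimes_R M^\dagger)/\bsx(M\otimes_R M^\dagger)\;\cong\;(M/\bsx M)\otimes_{R/\bsx}(M^\dagger/\bsx M^\dagger)\;\cong\;k^{\mu(M)\mu(M^\dagger)}.
\]
But this quotient is also $\omega/\bsx\omega$, the canonical module of the Artinian ring $R/\bsx$, namely $E_{R/\bsx}(k)$, the injective hull of the residue field. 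When $R$ is not regular, $\bar\fm=\fm/\bsx$ is nonzero with $\bar\fm^2=0$, and one computes $\bar\fm\cdot E_{R/\bsx}(k)=\Soc(E_{R/\bsx}(k))=k\neq 0$, so $E_{R/\bsx}(k)$ is not annihilated by $\bar\fm$. This contradicts the Ulrich computation and rules out $a=0$. (If $R$ is regular, $\omega=R$ and $M\cong R^n$ with $M\otimes_R M^\dagger\cong R^{n^2}\cong R$ forces $n=1$, so $M\cong R$ directly.)

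The crux is establishing the Ulrich reduction $\fm M=\bsx M$ for first syzygies of MCM modules from the minimal multiplicity hypothesis, together with the identification $\omega/\bsx\omega\cong E_{R/\bsx}(k)$ and its failure to be $\bar\fm$-annihilated in the non-regular case; once these are in hand, the Krull--Schmidt and length arguments are routine.
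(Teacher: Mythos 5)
Your argument is correct, and it takes a genuinely different route from the paper's. The paper passes immediately to the Artinian ring $R/\fq$ (where $\fq$ is a minimal reduction of $\fm$), applies Matlis duality to the hypothesis $\omega/\fq\omega\cong M/\fq M\otimes_{R/\fq}\Hom_{R/\fq}(M/\fq M,\omega/\fq\omega)$ to obtain $R/\fq\cong\Hom_{R/\fq}(M/\fq M,M/\fq M)$, uses the syzygy hypothesis to embed $M/\fq M$ into a free $R/\fq$-module and hence (via $\fm^2=\fq\fm$) decompose it as $k^{\oplus c}\oplus(R/\fq)^{\oplus b}$, computes the endomorphism ring of this, and lifts $M/\fq M\cong R/\fq$ back to $M\cong R$. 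You instead work directly with $M$ over the completion, split off the free part $M=R^{a}\oplus M'$, compare multiplicities of the indecomposable $\omega$ to force $a\leq 1$, handle $a=1$ by cancellation and $\omega$-duality of MCM modules, and rule out $a=0$ by observing that the minimal-multiplicity and syzygy hypotheses together force a free-summand-free syzygy to be Ulrich, which makes $\omega/\bsx\omega$ a $k$-vector space, contradicting the structure of $E_{R/\bsx}(k)$. What the paper's approach buys is that it never needs Krull--Schmidt over the complete ring (only over the Artinian $R/\fq$, where it is automatic) and the final step is a single application of \cite[1.3.5]{BH}; what your approach buys is a more conceptual picture (the Ulrich reduction makes visible exactly why the no-free-summand case is impossible) and it isolates the role of $\omega$-duality more cleanly. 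Both arguments use the same two pillars --- the minimal reduction with $\fm^{2}=\bsx\fm$ and the syzygy hypothesis giving a good embedding of $M$ into a free module --- but deploy them differently. One small thing to tighten: when you write $\bar\fm\cdot E_{R/\bsx}(k)=\Soc(E_{R/\bsx}(k))$, you should note the containment $\bar\fm E\subseteq\Soc E$ is from $\bar\fm^{2}=0$, and the non-vanishing of $\bar\fm E$ follows because $\bar\fm E=0$ would force $E$ to be a $k$-vector space, hence $E\cong k$, hence $R/\bsx\cong k$, contradicting non-regularity.
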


Let us note that, in Theorem \ref{1intro}, the isomorphism between $M\otimes_RM^{\dagger}$ and $\omega$ need not be the natural one. Prior to giving a proof of Theorem \ref{1intro}, we record an example showing that the isomorphism $M\otimes_RM^{\dagger}\cong \omega$ does not necessarily imply $M\cong R$, in general. Subsequently we give an application of Theorem \ref{1intro} that concerns reflexive ideals; see Corollary \ref{corend}.


\section{Tensoring with weakly $\fm$-full ideals}

In this section we give a proof of Proposition \ref{propintro} and discuss several homological properties of weakly $\fm$-full ideals. Recall that an ideal $I$ of $R$ is called \emph{$\fm$-full} if $\fm I:x=I$ for some $x\in \fm$. As mentiond by Watanabe in \cite{Junzo}, $\fm$-full ideals were first defined and studied by Rees (unpublished); see also \cite[2.1]{Goto1}. Motivated by this definition, a class of ideals is defined  as follows:

\begin{dfn} (\cite[3.7]{CIST}) Let $R$ be a local ring and let $I$ be an ideal of $R$. Then $I$ is said to be \emph{weakly $\fm$-full} ideal provided that $\fm I: \fm \subseteq I$, or equivalently, $\fm I: \fm =I$.
\end{dfn}

Notice it follows $I \subseteq \fm I: \fm  \subseteq \fm I:x$ so that each $\fm$-full ideal is weakly $\fm$-full. Examples of weakly $\fm$-full ideals include non-maximal prime ideals; see \cite{CIST}. Moreover we have:

\begin{rmk} \label{RM} Let $J$ be an ideal of $R$, and set $I=J:_R\fm$. Then, since $\fm I \subseteq J$, we have $\fm I :\fm \subseteq J: \fm=I$ so that $I$ is a weakly $\fm$-full ideal of $R$.
\end{rmk}

Prior to giving a proof of Proposition \ref{propintro}, we record examples of weakly $\fm$-full ideals that are not $\fm$-full. To obtain such examples, we need some preliminary results.

\begin{chunk} (Corso and Polini \cite[2.1 and 2.2]{CAPC}) \label{CP} Let $R$ be a Cohen-Macaulay local ring which is not regular. If $\fq$ is a parameter ideal of $R$ and $I=\fq : \fm$, then $I^2=\fq I$ and $\fm I=\fm \fq$.
\end{chunk}

\begin{chunk} (Goto \cite[2.2(2)]{Goto1}) \label{Gotomfull} Let $R$ be a local ring and let $J$ be an $\fm$-full ideal of $R$. If $I$ is an ideal of $R$ such that $J \subseteq I$ and $\length(I/J)$ is finite, then $\mu(I)\leq \mu(J)$, where $ \mu(M)$ denotes the cardinality of a minimal generating set of an $R$-module $M$.
\end{chunk}

The next lemma is straightforward so that we omit its proof.

\begin{lem} \label{new} Let $R$ be a local ring, and let $I$ and $J$ be ideals of $R$ such that $J \subseteq I$ and $\fm I= \fm J$. Then it follows $\mu(I)=\mu(J)+\length(I/J)$.
\end{lem}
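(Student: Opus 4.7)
The plan is to use the standard identification $\mu(N)=\dim_k(N/\fm N)$ for a finitely generated $R$-module $N$, together with the observation that the hypothesis $\fm I=\fm J$ forces $I/J$ to be annihilated by $\fm$ and hence to be a $k$-vector space.

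First I would note that $\fm I = \fm J \subseteq J$ immediately yields $\fm (I/J) = 0$, so $I/J$ is a $k$-vector space and consequently $\length_R(I/J)=\dim_k(I/J)$. Next I would assemble the natural short exact sequence of $R$-modules
\begin{equation*}
0 \lra J/\fm J \lra I/\fm I \lra I/J \lra 0,
\end{equation*}
where the right-hand map is induced by the quotient $I \twoheadrightarrow I/J$ (well defined because $\fm I\subseteq J$) and the left-hand map is induced by the inclusion $J\hra I$ (well defined because $\fm J\subseteq \fm I$). Exactness in the middle is immediate; the only point requiring a sentence of verification is injectivity of $J/\fm J \to I/\fm I$, which amounts to the identity $\fm J = \fm I \cap J$. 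One containment is trivial, and the reverse follows from $\fm I = \fm J$: any element of $\fm I \cap J$ already lies in $\fm J$.

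Since every term in the sequence is a $k$-vector space (the middle one because $\fm(I/\fm I)=0$, the outer ones by what we just said), counting $k$-dimensions gives
\begin{equation*}
\dim_k(I/\fm I) \;=\; \dim_k(J/\fm J) + \dim_k(I/J),
\end{equation*}
which by Nakayama's lemma translates into $\mu(I)=\mu(J)+\length(I/J)$, as desired.

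There is no serious obstacle here: the argument is a one-page diagram chase after the initial observation $\fm(I/J)=0$. The only place where the hypothesis $\fm I=\fm J$ (rather than the weaker $\fm I\subseteq J$) is genuinely used is in establishing $\fm J = \fm I\cap J$, which secures injectivity on the left of the displayed sequence.
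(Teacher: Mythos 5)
Your proof is correct. The paper itself omits the argument (it says only that the lemma is straightforward), but your approach is the natural one: observe that $\fm(I/J)=0$ so $I/J$ is a finite-dimensional $k$-vector space, build the short exact sequence $0 \to J/\fm J \to I/\fm I \to I/J \to 0$, and count $k$-dimensions, using Nakayama to convert $\dim_k(I/\fm I)$ and $\dim_k(J/\fm J)$ into $\mu(I)$ and $\mu(J)$. One small simplification worth noting: since the hypothesis gives the full equality $\fm I = \fm J$ (not merely $\fm I\subseteq J$), the left-hand map $J/\fm J\to I/\fm I$ is literally the inclusion $J/\fm I\hookrightarrow I/\fm I$, so its injectivity is immediate and there is no need to separately verify $\fm J=\fm I\cap J$; your verification is correct, just slightly more than is required.
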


\begin{prop} \label{thmfull} Let $R$ be a $d$-dimensional Cohen-Macaulay local ring and let $I=\fq : \fm$ for some parameter ideal $\fq$ of $R$. If $\edim R > d+r$, where $r$ is the Cohen-Macaulay type of $R$, i.e., $r=\length(\Ext_R^d(R/\fm,R))$, then $I$ is not $\fm$-full.
\end{prop}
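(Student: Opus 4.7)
The plan is to argue by contradiction: suppose $I=\fq:\fm$ is $\fm$-full and derive the impossible inequality $\edim R\leq d+r$. The route is to pin down $\mu(I)$ exactly, and then to compare it with $\mu(\fm)$ by means of Goto's inequality \ref{Gotomfull}.

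First I would compute $\mu(I)$. By \ref{CP} one has $\fm I=\fm\fq$, and clearly $\fq\subseteq I$, so applying Lemma \ref{new} to the pair $\fq\subseteq I$ gives $\mu(I)=\mu(\fq)+\length(I/\fq)$. The first summand equals $d$, since $R$ is Cohen-Macaulay and $\fq$ is generated by a system of parameters. For the second, note that $I/\fq=(\fq:\fm)/\fq=\Soc(R/\fq)$; cutting down by a maximal $R$-regular sequence inside $\fq$ identifies this socle with $\Ext^d_R(R/\fm,R)$, whose length is by definition the Cohen-Macaulay type $r$. Hence $\mu(I)=d+r$.

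Next I would check that the set-up for \ref{Gotomfull} is in place. The assumption $\edim R>d+r\geq d+1$ excludes the regular case, so $\fq\subsetneq\fm$; consequently $\fm\not\subseteq\fq$ and $I=\fq:\fm$ is a proper ideal. Thus $\fq\subseteq I\subseteq\fm$, and $\length(\fm/I)$ is finite because $R/\fq$ (hence $R/I$) is Artinian.

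Finally, assuming $I$ is $\fm$-full, \ref{Gotomfull} applied with $J=I$ and the enclosing ideal $\fm$ yields $\edim R=\mu(\fm)\leq\mu(I)=d+r$, contradicting the hypothesis. I expect the only step that is not an immediate consequence of the lemmas already recorded in this section to be the identification $\length(\Soc(R/\fq))=r$, which is standard local duality in an Artinian reduction; everything else falls out of \ref{CP}, Lemma \ref{new} and \ref{Gotomfull}.
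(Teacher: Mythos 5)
Your proof is correct and takes essentially the same route as the paper's: compute $\mu(I)=d+r$ by combining Corso--Polini's identity $\fm I=\fm\fq$ from \ref{CP} with Lemma \ref{new} applied to $\fq\subseteq I$, and then apply Goto's inequality \ref{Gotomfull} with $\fm$ as the enclosing ideal to contradict the hypothesis $\edim R>d+r$. The only difference is that you spell out the identification $\length(I/\fq)=r$ via the socle of the Artinian reduction, which the paper simply asserts as the definition of Cohen--Macaulay type.
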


\begin{proof} Note, as $\edim R \neq  d$, $R$ is not regular and so $I$ is a proper ideal of $R$. Moreover the Cohen-Macaulay type of $R$, which is the length of $I/\fq$, is $r$. Since Theorem \ref{CP} implies $\fm I = \fm\fq$, we conclude from Lemma \ref{new} that $\mu(I)=d+r$. Therefore, if $I$ is $\fm$-full, then it follows from Theorem \ref{Gotomfull} that $\edim R=\mu(\fm)\leq \mu(I)=d+r$, which contradicts our assumption. Thus $I$ is not $\fm$-full.
\end{proof}

We are now ready to give two examples of weakly $\fm$-full ideals that are not $\fm$-full:

\begin{eg} \label{eg1} Let $R=\CC[\![t^4,t^5,t^6]\!]$ and let $\fq=(t^4)$. Then $R$ is a one-dimensional complete intersection ring and $\fq$ is a parameter ideal of $R$. Set $I=\fq:\fm$. Then Proposition \ref{thmfull} shows that $I$ is weakly $\fm$-full but not $\fm$-full. In particular, $I$ is not integrally closed; see \cite[2.4]{Goto1}. Note, as $t^7\notin R$, it follows that $t^{11}\notin \fq$. On the other hand, since $t^{16}$ and $t^{12}$ belong to $\fq$, we see $t^{11}\in I$. Therefore $I=\fq+t^{11}R=(t^4,t^{11})$.
\end{eg}

\begin{eg} Let $R=\CC[\![t^7,t^9,t^{11}, t^{13}]\!]$ and let $q=(t^{14})$. Then $R$ is a one-dimensional local domain that is not Gorenstein, and $\fq$ is a parameter ideal of $R$. Set $I=\fq:\fm$. Then Proposition \ref{thmfull} shows that $I$ is weakly $\fm$-full but not $\fm$-full. In particular, $I$ is not integrally closed; see \cite[2.4]{Goto1}. Furthermore one can check that $I=(t^{14}, t^{29}, t^{31}, t^{33})$.
\end{eg}

We proceed by recalling the definition of \emph{rigid-test} and \emph{strongly-rigid} modules.

\begin{dfn}\label{rt} (\cite{CGSZ, LongLC}) Let $R$ be a local ring and let $M$ be a finitely generated $R$-module. 

(i) $M$ is called \emph{rigid-test} provided that $M$ is Tor-rigid and $\pd$-test, i.e., the following condition holds: if $N$ is a finitely generated $R$-module with $\Tor^R_n(M,N)=0$ for some $n\geq 1$, then $\Tor^R_i(M,N)=0$ for all $i\geq n$, and $\pd(N)<\infty$; see \cite[2.3]{CGSZ}.

(ii) $M$ is called \emph{strongly rigid} provided that the following condition holds: if $T$ is a finitely generated $R$-module with $\Tor^R_s(M,T)=0$ for some $s\geq 1$, then $\pd_R(T)<\infty$; see \cite[2.1]{LongLC}. $\qed$
\end{dfn}

Although each rigid-test module is strongly-rigid, it is unknown if  the converse holds; see \cite[2.5]{CGSZ} for details. A result of Huneke, Corso, Katz and Vascencelos \cite[3.3]{CHKV} states that integrally closed $\fm$-primary ideals are rigid-test, and hence strongly rigid. The aim of this section is to point out that their argument in fact shows the same result for weakly $\fm$-full ideals. This will allow us to obtain various examples supporting Conjecture \ref{conjHW}; see Corollary \ref{cor2}. Note that, if $k$ is infinite, then integrally closed ideals are $\fm$-full, and hence weakly $\fm$-full, but there are weakly $\fm$-full ideals that are not integrally closed; see \cite[2.4]{Goto1}, above (2.7 and 2.8), and Section 3.

We now proceed and slightly modify the proof given in \cite{CHKV} to prove that $\fm$-primary weakly $\fm$-full ideals are rigid-test; since this fact has already been established for integrally closed ideals, we include an argument -- for completeness -- only for weakly $\fm$-full ideals; cf.,  \cite[3.1 and 3.2]{CHKV}.

\begin{thm} (Corso, Huneke, Katz and Vasconcelos \cite[3.3]{CHKV}) \label{CHKV} Let $R$ be a local ring and let $I$ be an $\fm$-primary ideal of $R$. Assume $I$ is weakly $\fm$-full, or integrally closed. If $\Tor_t^R(M,R/I)=0$ for some $R$-module $M$ and nonnegative integer $t$, then $\pd(M)<t$. In particular $I$ is a rigid-test $R$-module.
\end{thm}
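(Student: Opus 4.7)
The plan is to mirror the argument of Corso, Huneke, Katz and Vasconcelos, with the weakly $\fm$-full condition $\fm I :_R \fm = I$ substituted for the integrally closed hypothesis at the single step where it is actually used in \cite{CHKV}. The argument proceeds by induction on $t$. For $t = 0$, $\Tor_0^R(M, R/I) = M/IM = 0$ forces $M \subseteq IM \subseteq \fm M$, so $M = 0$ by Nakayama. For $t \geq 2$, pass to a first syzygy $\Omega M$: the sequence $0 \to \Omega M \to F \to M \to 0$ with $F$ free yields $\Tor_{t-1}^R(\Omega M, R/I) \cong \Tor_t^R(M, R/I) = 0$, so the inductive hypothesis applied to $\Omega M$ and $t - 1 \geq 1$ gives $\pd(\Omega M) < t-1$, whence $\pd(M) < t$. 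Thus the essential content is in the case $t = 1$.

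For $t = 1$, given $\Tor_1^R(M, R/I) = 0$, I must show $M$ is free. Fix a minimal generating set $x_1, \ldots, x_d$ of $\fm$ and consider
$$\phi \colon R/I \lra (R/\fm I)^{\oplus d}, \qquad r + I \longmapsto (x_1 r + \fm I, \ldots, x_d r + \fm I).$$
The kernel of $\phi$ equals $(\fm I :_R \fm)/I = I/I = 0$ by the weakly $\fm$-full hypothesis, so $\phi$ is injective; equivalently, $\Soc(R/\fm I) = I/\fm I$. This is precisely the input that replaces integral closedness in \cite[3.1]{CHKV}. I would then chase the Tor long exact sequences attached to
$$0 \to R/I \xra{\phi} (R/\fm I)^{\oplus d} \to \coker \phi \to 0, \quad 0 \to \fm I \to I \to I/\fm I \to 0, \quad 0 \to I \to R \to R/I \to 0,$$
together with the identification $I/\fm I \cong k^{\mu(I)}$, exactly as in \cite[3.1, 3.2]{CHKV}, to propagate the vanishing $\Tor_1^R(M, R/I) = 0$ to $\Tor_1^R(M, k) = 0$, which by the standard freeness criterion finishes the main case.

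The final assertion that $I$ is a rigid-test module is formal: if $\Tor_n^R(I, N) = 0$ for some $n \geq 1$ and some finitely generated $N$, the sequence $0 \to I \to R \to R/I \to 0$ gives $\Tor_{n+1}^R(R/I, N) = 0$, and the main statement then yields $\pd(N) \leq n$, whence both Tor-rigidity and $\pd$-testing follow. The principal obstacle I anticipate is pinpointing the precise step in the $t = 1$ diagram chase where $\ker \phi = 0$ must be invoked in place of \cite{CHKV}'s use of integral closedness; once this is isolated, their bookkeeping transfers verbatim, since none of their other manipulations appeals to integral closure beyond the colon equality $\fm I :_R \fm = I$.
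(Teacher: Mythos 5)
Your syzygy reduction to $t=1$ and your computation $\ker\phi=(\fm I:\fm)/I=0$ are both correct, and you have correctly located where the weakly $\fm$-full hypothesis must enter, namely as the colon equality $\fm I:\fm=I$. However the case $t=1$, which you yourself call ``the essential content,'' is never actually proved. The proposal sets up three short exact sequences and then defers the entire diagram chase to ``exactly as in [3.1, 3.2]'' of \cite{CHKV}, while simultaneously conceding that you ``have not yet pinpointed the precise step \dots\ where $\ker\phi=0$ must be invoked.'' That concession is the gap: the claim that CHKV's bookkeeping ``transfers verbatim'' to the weaker hypothesis is asserted, not verified, and the one lemma on which the whole theorem rests is left unestablished. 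If any step in CHKV's chase invokes integral closure through some mechanism other than the colon equality, your adaptation breaks.

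The paper takes a different and self-contained route that avoids the reduction to $t=1$ entirely. It works on a minimal free resolution $F$ of $M$ directly at degree $t$: assuming $\overline{\partial}_t\neq0$, it produces an element $u=\partial_t(v)$ with $u\notin IF_{t-1}$ but $\overline{u}\in\Soc(F_{t-1}/IF_{t-1})$. For each $x\in\fm$, the vanishing $\Tor_t^R(M,R/I)=0$ lets one lift $\overline{xv}$ through $\overline{\partial}_{t+1}$, and minimality of $F$ forces $xu=\partial_t(w_0)\in\partial_t(IF_t)\subseteq\fm IF_{t-1}$. Hence $\fm u\subseteq\fm IF_{t-1}$, so $u\in(\fm I:\fm)F_{t-1}=IF_{t-1}$, contradicting the choice of $u$. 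In this argument the weakly $\fm$-full condition appears in exactly one line at the very end, so there is no ambiguity about where it is used. To repair your proposal, either carry out the CHKV $t=1$ chase in full and verify that every step needs only $\fm I:\fm=I$, or adopt the direct socle-element argument above, which also has the advantage of treating all $t\geq1$ at once.
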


\begin{proof} If $t=0$, then $M=0$ and $\pd(M)=-\infty$. Thus we may assume $t$ is positive.
Consider a minimal free resolution of $M$:
$$F=\cdots \longrightarrow F_{t+1}  \longrightarrow F_t \stackrel{^{\partial_{t}}} \longrightarrow  F_{t-1} \to \cdots \to F_0 \to 0 $$
Tensoring $F$ with $R/I$, we obtain the complex:
$$\overline{F}=\cdots \longrightarrow \overline{F}_{t+1}  \longrightarrow \overline{F}_t \stackrel{^{\overline{\partial}_{t}}} \longrightarrow  \overline{F}_{t-1} \to \cdots \to \overline{F}_0 \to 0 $$
Assume $\overline{\partial}_{t}=0$. Then $(\im \partial_{t+1}+IF_t)/IF_t= \im \overline{ \partial}_{t+1} = \ker \overline{\partial}_{t}=F_t/IF_t$. Hence $\im \partial_{t+1}+IF_t=F_t$ so that $\im \partial_{t+1}=F_t$ by Nakayama's lemma, i.e., $\partial_{t}=0$ and $\partial_{t-1}$ is injective. This would show $\pd(M)<t$.  Hence we assume $\im \overline{\partial}_{t} \neq 0$ and seek a contradiction.

As $\im \partial_{t} \subseteq F_{t-1}$, we have $(\im \partial_{t}) I \subseteq I F_{t-1}$. Since $I$ is $\fm$-primary, there is a positive integer $s$ such that $(\im \partial_{t}) \fm^s\subseteq I F_{t-1}$. We may assume $s$ is the smallest integer with this property. So $(\im \partial_{t}) \fm^{s-1} \nsubseteq I F_{t-1}$. Let $u \in (\im \partial_{t}) \fm^{s-1}$ with $u \notin I F_{t-1}$. Then $\fm u \in I F_{t-1}$. Therefore, $\overline{u} \in \Soc(F_{t-1}/ I F_{t-1})=\{\alpha \in F_{t-1}/ I F_{t-1}: \alpha \fm =0\}$. Moreover $u \in (\im \partial_{t}) \fm^{s-1} \subseteq \im \partial_{t}$ so that $\overline{u} \in \im \overline{\partial}_{t}$. Consequently,
\begin{equation}
u \notin I F_{t-1} \text{ and } \overline{u} \in \Soc(F_{t-1}/ I F_{t-1}) \cap \im \overline{\partial}_{t} 
\end{equation}
Since $u\in (\im \partial_t)\fm^{s-1}\subseteq \im \partial_t$, we have $u=\partial_t(v)$ for some $v\in F_{t}$. 

Let $x\in \fm$. Then it follows from the fact $\overline{u} \in \Soc(F_{t-1}/ I F_{t-1})$ that $xu \in I F_{t-1}$. Since $xu=x\partial_{t}(v)$, we deduce that $x\partial_{t}(v) \in I F_{t-1}$. This gives the series of implications:
$$\partial_{t}(xv)\in I F_{t-1} \; \Longrightarrow \; \overline{\partial}_{t}(\overline{xv})=0 \; \Longrightarrow \; \overline{xv} \in \im(\overline{\partial}_{t+1}) \; \Longrightarrow \; \overline{\partial}_{t+1}(\overline{z})=\overline{ \partial_{t+1}(z) }=\overline{xv} \;\;  \text{ for some } \; \overline{z}\in F_{t+1}/IF_{t+1}.$$
Consequently, we have
$$\partial_{t+1}(z)-xv\in I F_t \; \Longrightarrow \; xv=\partial_{t+1}(z)+w_0 \;  \text{ for some } w_0\in IF_t \; \Longrightarrow \; x\partial_{t}(v)=\partial_{t}(w_0).$$
Since $xu=x\partial_{t}(v)$, we get $xu=\partial_{t}(w_0)$. Note that $\partial_{t}(w_0) \in \partial_{t}(IF_t)\subseteq I\fm F_{t-1}$. Therefore $xu\in  I\fm F_{t-1}$. As $x$ is arbitrary, we obtain $\fm u \subseteq \fm I F_{t-1}$. It follows that $u\in (\fm I: \fm)F_{t-1}=IF_{t-1}$.
\end{proof}

Next we discuss tensoring certain modules with strongly rigid ones. Recall that each rigid-test module is strongly rigid; see \ref{rt}.

\begin{obs} \label{obswmf} Let $R$ be a local ring with $\depth(R)\geq 1$, and let $M$ be a nonzero strongly rigid $R$-module such that $M_{\fp}$ is a free $R_{\fp}$-module for each associated prime ideal $\fp$ of $R$. If $N$ embeds into a free $R$-module and $M\otimes_{R}N$ is torsion-free, then $N$ is free.

To observe this, note there is a short exact sequence of the form $0 \to N  \to F \to C \to 0$, where $F$ is a free $R$-module. Applying $-\otimes_RM$, we obtain the injection $\Tor_1^R(C,M) \hookrightarrow M\otimes_{R}N$. Since $M_{\fp}$ is a free $R_{\fp}$-module for each associated prime ideal $\fp$ of $R$, we have that $\Tor_1^R(C,M)$ is torsion. As $M\otimes_{R}N$ is torsion-free, this implies $\Tor_1^R(C,M)$=0, i.e., $C$ is free. Hence we see that $N$ is free as the short exact sequence $0 \to N  \to F \to C \to 0$ splits.
\end{obs}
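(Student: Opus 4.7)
The plan is to convert the statement ``$N$ is free'' into the vanishing of a single $\Tor$ group, verify that vanishing by localizing at associated primes of $R$, and then exploit the strong rigidity of $M$ together with the depth hypothesis to finish. First, using the given embedding $N \hookrightarrow F$ with $F$ a free module, I would form the short exact sequence
$$
0 \to N \to F \to C \to 0, \qquad C := F/N.
$$
Applying $M\otimes_R -$ and using that $F$ is free (so $\Tor_1^R(M,F)=0$), the long exact sequence for $\Tor$ reduces to
$$
0 \to \Tor_1^R(M,C) \to M\otimes_R N \to M\otimes_R F \to M\otimes_R C \to 0.
$$

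The second step is to show the leftmost term vanishes. Since $M_\fp$ is free over $R_\fp$ for each $\fp \in \Ass R$, one has $\Tor_1^R(M,C)_\fp = \Tor_1^{R_\fp}(M_\fp, C_\fp) = 0$ at every associated prime of $R$. This forces $\Tor_1^R(M,C)$ to be a torsion $R$-module: its annihilator avoids every associated prime of $R$ (and such associated primes consist entirely of zerodivisors), and $\depth R \geq 1$ guarantees the existence of nonzerodivisors to witness the torsion. On the other hand, $\Tor_1^R(M,C)$ embeds into $M \otimes_R N$, which is torsion-free by hypothesis. Hence $\Tor_1^R(M,C)=0$.

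Finally, the vanishing $\Tor_1^R(M,C)=0$ combined with strong rigidity of $M$ forces $\pd_R(C) < \infty$. To conclude that $N$ is free I would note that $N$ is a first syzygy of $C$ (up to a free summand, by Schanuel's lemma); in the one-dimensional setting that drives the rest of the section, Auslander--Buchsbaum gives $\pd_R(C) \leq 1$, so either $C$ is free and the sequence $0\to N\to F\to C\to 0$ splits, or $\pd_R(C)=1$ and the first syzygy of $C$ is free. In either case $N$ comes out free. I expect the main obstacle to be precisely this last step: translating $\pd_R(C)<\infty$ into honest freeness of $N$ is clean when $\depth R = 1$, but in higher depth one either needs to iterate the argument on successive syzygies or strengthen the hypothesis on $M$ to rigid-test, so as to propagate vanishing to higher $\Tor$'s and then collapse the finite resolution.
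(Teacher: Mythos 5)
Your argument mirrors the paper's proof exactly through the key steps: form the short exact sequence $0 \to N \to F \to C \to 0$, extract the injection $\Tor_1^R(C,M) \hookrightarrow M\otimes_R N$, show $\Tor_1^R(C,M)$ is torsion by localizing at the associated primes of $R$, and conclude $\Tor_1^R(C,M)=0$. Where you depart is the final step, and there your version is actually more careful than the paper's. The paper compresses the conclusion to ``$\Tor_1^R(C,M)=0$, i.e., $C$ is free,'' but strong rigidity alone only yields $\pd_R(C)<\infty$; it does not by itself force $C$ to be free, and indeed $C$ need not be free for the observation to hold. You correctly supply the missing reasoning: Auslander--Buchsbaum together with $\depth R=1$ gives $\pd_R(C)\leq 1$, so the first syzygy of $C$ --- which is $N$ up to free summands by Schanuel --- must be free. (One could equivalently note that $N$, being a torsion-free submodule of a free module, has $\depth_R N\geq 1=\depth R$, so Auslander--Buchsbaum applied directly to $N$ gives $\pd_R N=0$.) Your caution about $\depth R>1$ is also well placed: the Observation as stated allows $\depth R\geq 1$, but the paper only ever invokes it through Corollary~\ref{maincorollary}, where the ambient depth is reduced to exactly $1$, so the potential gap for higher depth never materializes in the applications.
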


One can also use Observation \ref{obswmf} to determine that an ideal is \emph{not} weakly $\fm$-full.

\begin{eg} Let $R=\CC[\![t^4,t^5,t^6]\!]$, $I=(t^4,t^5)$ and let $J=(t^4, t^6)$. Then it follows that $\Tor_2^R(R/I,R/J)=0$, i.e., $I\otimes_{R}J$ is torsion-free; see, for example, \cite[4.3]{HW1}. Since $I$ and $J$ are not principal, we conclude from Theorem \ref{CHKV} and Observation \ref{obswmf} that $I$ and $J$ are not weakly $\fm$-full ideals. In fact $t^6\fm=(t^{10}, t^{11}, t^{12}) \subseteq \fm I =(t^8,t^9, t^{10}, t^{11})$ so that $t^6 \in \fm I:\fm$ and so $\fm I:\fm \nsubseteq I$. Similarly one can check that $J$ is not weakly $\fm$-full directly.
\end{eg}

\begin{lem} \label{l1} Let $R$ be a local ring such that $\depth(R)\leq 1$, and let $M$ be a finitely generated torsion-free $R$-module. Then $M^{\ast}$ is free if and only if $M$ is free.
\end{lem}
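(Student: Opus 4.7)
The plan (the ``if'' direction being trivial) is to show the natural biduality map $\sigma_M\colon M\to M^{\ast\ast}$ is an isomorphism when $M^{\ast}\cong R^n$; this will give $M\cong R^n$, because $M^{\ast}\cong R^n$ forces $M^{\ast\ast}\cong R^n$. The starting observation is that the dual map $\sigma_M^{\ast}\colon M^{\ast\ast\ast}\to M^{\ast}$ is an isomorphism: $M^{\ast}$, being free, is reflexive, so $\sigma_{M^{\ast}}\colon M^{\ast}\to M^{\ast\ast\ast}$ is an isomorphism, and the standard identity $\sigma_M^{\ast}\circ\sigma_{M^{\ast}}=\id_{M^{\ast}}$ combined with the fact that a surjective endomorphism of a Noetherian module is an isomorphism forces $\sigma_M^{\ast}$ itself to be an isomorphism.

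For surjectivity of $\sigma_M$, set $C=\coker\sigma_M$ and factor $\sigma_M=i\circ p$ with $p\colon M\twoheadrightarrow I:=\im\sigma_M$ and $i\colon I\hookrightarrow M^{\ast\ast}$. Dualizing $0\to I\to M^{\ast\ast}\to C\to 0$ (using $\Ext^1_R(M^{\ast\ast},R)=0$) produces $0\to C^{\ast}\to M^{\ast\ast\ast}\xrightarrow{i^{\ast}}I^{\ast}\to \Ext^1_R(C,R)\to 0$. Chasing $\sigma_M^{\ast}=p^{\ast}\circ i^{\ast}$ with $p^{\ast}\colon I^{\ast}\hookrightarrow M^{\ast}$ injective and $\sigma_M^{\ast}$ an isomorphism shows that both $p^{\ast}$ and $i^{\ast}$ are isomorphisms, so $\Hom_R(C,R)=\Ext^1_R(C,R)=0$. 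Hence $\operatorname{grade}_R C\geq 2$, while the universal bound $\operatorname{grade}_R N\leq \depth R\leq 1$ for nonzero finitely generated $N$ forces $C=0$, so $\sigma_M$ is surjective.

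With $\sigma_M$ surjective, dualizing $0\to K\to M\xrightarrow{\sigma_M}M^{\ast\ast}\to 0$ (where $K=\ker\sigma_M$) and using $\Ext^1_R(M^{\ast\ast},R)=0$ yields $0\to M^{\ast\ast\ast}\xrightarrow{\sigma_M^{\ast}} M^{\ast}\to K^{\ast}\to 0$; since $\sigma_M^{\ast}$ is an isomorphism, $K^{\ast}=0$. Now the torsion-free hypothesis on $M$ enters: $K\subseteq M$ is itself torsion-free. If $\depth R=0$, then $\Soc R\neq 0$ and any nonzero finitely generated $N$ admits a nonzero composite $N\twoheadrightarrow k\hookrightarrow\Soc R\subseteq R$, so $K^{\ast}=0$ forces $K=0$. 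If $\depth R=1$, the total quotient ring $Q$ of $R$ is zero-dimensional and Noetherian; the flat base change identity $K^{\ast}\otimes_R Q\cong \Hom_Q(K\otimes_R Q,Q)$ applies, and a nonzero torsion-free $K$ would give $K\otimes_R Q\neq 0$, whence the socle argument over the Artinian local components of $Q$ would produce a nonzero element of this Hom, contradicting $K^{\ast}=0$. Thus $K=0$, $\sigma_M$ is an isomorphism, and $M$ is free. The main obstacle I expect is this final $K=0$ step, but the torsion-free hypothesis is exactly the input that makes both depth cases go through.
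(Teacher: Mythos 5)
Your approach is genuinely different from the paper's, and for the most part it works. The paper reduces to an indecomposable summand $X$, uses the Auslander transpose sequence $0\to X^{\ast}\to F_0^{\ast}\to F_1^{\ast}\to\Tr X\to 0$ together with Auslander--Buchsbaum to get $\pd(\Tr X)\le 1$, hence $\Ext^2_R(\Tr X,R)=0$, which (by Auslander--Bridger) makes the biduality map surjective; since the target is free and $X$ is indecomposable, the surjection splits and $X$ is free. You instead argue directly on $\sigma_M$: the identity $\sigma_M^{\ast}\circ\sigma_{M^{\ast}}=\id_{M^{\ast}}$ together with reflexivity of the free module $M^{\ast}$ makes $\sigma_M^{\ast}$ an isomorphism; the grade computation (which correctly uses $\operatorname{grade}C\le\depth R$ for $C\ne 0$) gives surjectivity of $\sigma_M$; and the torsion-free hypothesis is then used to kill $\ker\sigma_M$. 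Steps 1 and 2 are correct, and this is a reasonable alternative to the paper's transpose argument.

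The flaw is in step 3, in the assertion that if $\depth R=1$ then the total quotient ring $Q$ is zero-dimensional. This is false. For instance, take $S=k[x,y]_{(x,y)}/(x^2,xy)$, which has $\Ass S=\{(x),(x,y)\}$ and $\depth S=0$, and set $R=S[[t]]$. Then $\depth R=1$, the zero-divisors of $R$ form the prime $(x,y)R$, and $Q=R_{(x,y)R}$ has the chain $(x)Q\subsetneq(x,y)Q$, so $\dim Q=1$. The presence of an embedded associated prime in $R$ (here $(x,y)R\supsetneq(x)R$) is exactly what prevents $Q$ from being Artinian, and nothing about $\depth R\le 1$ rules this out. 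Fortunately, zero-dimensionality of $Q$ is not what your argument actually needs. What you need is: every maximal ideal of $Q$ lies in $\Ass_Q Q$. This is always true, because the maximal ideals of $Q$ are $\fp Q$ for $\fp$ maximal among the associated primes of $R$, and $\Ass$ localizes. Then for any nonzero finitely generated $Q$-module $N$, choose a maximal ideal $\fp Q\in\Supp_Q N$, surject $N\twoheadrightarrow\kappa(\fp Q)$, and embed $\kappa(\fp Q)\hookrightarrow Q$ via a socle element; this shows $\Hom_Q(N,Q)\ne 0$, and flat base change finishes as you intended. In fact, the statement ``$K$ torsion-free and $K^{\ast}=0$ imply $K=0$'' is precisely what the paper cites from \cite[1.2.3(b)]{BH} to dispose of the $X^{\ast}=0$ case, so you could simply invoke that instead of arguing through $Q$.
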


\begin{proof} Let $X$ be an indecomposable direct summand of $M$. It suffices to assume $X^{\ast}$ is free and prove $X$ is free. Note $X$ is torsion-free. Hence, if $X^{\ast}=0$, then  it follows from \cite[1.2.3(b)]{BH} that $X=0$. So we may assume $X^{\ast}\neq 0$. 

Consider a minimal presentation $F_1 \to F_0 \to X \to 0$ of $X$. This yields the following exact sequence
\begin{equation} \tag{\ref{l1}.1}
0 \to X^* \to F_0^* \to F_1^* \to \Tr X \to 0,
\end{equation}
where $\Tr X$ is the Auslander transpose of $X$. Since $X^{\ast}$ is free and $\depth(R)\leq 1$, (\ref{l1}.1) gives that $\pd(\Tr X)\leq 1$. Therefore $\Ext_R^2(\Tr X, R)=0$ and so the natural map $X \to X^{\ast\ast}$ is surjective; see \cite[2.8]{AuBr}. As $X^{\ast\ast}$ is free and $X$ is indecomposable, this map is an isomorphism so that $X$ is free.
\end{proof}

\begin{rmk} The conclusion of Lemma \ref{l1} has been recently established in \cite[3.9]{DEL} for the case where $R$ is Cohen-Macaulay and $\dim(R)\leq 1$; our argument extends \cite[3.9]{DEL} with a different and short proof; see also \cite[1.2]{Her} and \cite[Theorem 3]{MT} for some related results concerning Lemma \ref{l1}.
\end{rmk}

\begin{cor} \label{maincorollary} Let $R$ be a local ring such that $\depth(R)\leq 1$ and let $M$ be a torsion-free strongly rigid $R$-module. Assume $M_{\fp}$ is a free $R_{\fp}$-module for each associated prime ideal $\fp$ of $R$. If $M\otimes_RM^{\ast}$ is torsion-free, then $M$ is free.
\end{cor}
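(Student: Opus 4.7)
The plan is to combine Observation \ref{obswmf} with Lemma \ref{l1}, using $N = M^{\ast}$ as the module that embeds into a free module.

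First, I would observe that the algebraic dual $M^{\ast}$ of any finitely generated $R$-module always embeds into a free $R$-module: choosing a finite presentation $F_{1}\to F_{0}\to M\to 0$ and dualizing yields the exact sequence $0\to M^{\ast}\to F_{0}^{\ast}\to F_{1}^{\ast}$, so $M^{\ast}\hookrightarrow F_{0}^{\ast}$ with $F_{0}^{\ast}$ free. This is the only input from general module theory; no further structure on $M$ is needed here.

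Next, I would apply Observation \ref{obswmf} with $N:=M^{\ast}$. The hypotheses are already in place: $\depth(R)\geq 1$ is allowed (the observation only requires $\depth(R)\geq 1$, but the $\depth(R)=0$ case is vacuous because there $M^{\ast}$ would need a separate treatment — actually one should note that if $\depth(R)=0$ the argument via associated primes still works as in Observation \ref{obswmf}, so let me instead simply split into cases or invoke the observation carefully). Assuming $\depth(R)\geq 1$, since $M$ is strongly rigid, $M_{\fp}$ is $R_{\fp}$-free for each $\fp\in\Ass(R)$, and $M\otimes_{R}M^{\ast}$ is torsion-free by hypothesis, Observation \ref{obswmf} yields that $M^{\ast}$ is free. (If $\depth(R)=0$, one checks directly from the short exact sequence $0\to M^{\ast}\to F_{0}^{\ast}\to C\to 0$ that $\Tor_{1}^{R}(C,M)$ is supported only on $\Ass(R)$, hence is zero as a submodule of the torsion-free module $M\otimes_{R}M^{\ast}$, so the sequence splits and $M^{\ast}$ is free.)

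Finally, since $M$ is torsion-free and $M^{\ast}$ is free, Lemma \ref{l1} immediately gives that $M$ is free, completing the proof. There is no real obstacle here: the corollary is essentially the precise combination that Observation \ref{obswmf} and Lemma \ref{l1} were set up to produce, with $M^{\ast}$ playing the dual role of both "module that tensors with $M$ to give something torsion-free" and "dual of $M$ whose freeness forces freeness of $M$."
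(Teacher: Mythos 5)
Your proof is correct and follows exactly the paper's route: observe that $M^{\ast}$ embeds into a free module, apply Observation \ref{obswmf} with $N=M^{\ast}$ to deduce $M^{\ast}$ is free, then invoke Lemma \ref{l1} to get $M$ free. Two small remarks. First, the paper disposes of the $\depth(R)=0$ case more cleanly than you do: when $\depth(R)=0$, one has $\m\in\Ass(R)$, so the hypothesis that $M_{\fp}$ is free for $\fp\in\Ass(R)$ already gives that $M=M_{\m}$ is free over $R=R_{\m}$, with no need to re-run the Observation's argument. Second, in your parenthetical treatment of that case the phrase ``$\Tor_1^R(C,M)$ is supported only on $\Ass(R)$'' is backwards --- what you actually establish is that $\Tor_1^R(C,M)$ \emph{vanishes} when localized at each $\fp\in\Ass(R)$, i.e.\ it is a torsion module, which then forces it to be zero inside the torsion-free module $M\otimes_R M^{\ast}$.
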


\begin{proof} We may assume $\depth(R)=1$: otherwise $M$ would be free. Notice $M^{\ast}$, being torsionless, embeds into a free module. Therefore, it follows from Observation \ref{obswmf} that $M^{\ast}$ is free. So $M$ is free by Lemma \ref{l1}.
\end{proof}

\begin{rmk} \label{rmk3} Let $R$ be a local ring of positive depth and let $I$ be an $\fm$-primary ideal of $R$. Assume $I$ is either integrally closed, or weakly $\fm$-full. Assume further $I$ is principal. Then, since $I$ contains a non zero-divisor, $I$ is a free $R$-module so that $\pd_R(R/I)\leq 1$. In particular, $\Tor_2^R(R/\fm, R/I)=0$. It now follows from Theorem \ref{CHKV} that $\pd_R(R/\fm)\leq 1$, i.e., $R$ is a DVR.
\end{rmk}

Our next result establishes Proposition \ref{propintro} advertised in the introduction.

\begin{cor} \label{cor2} Let $R$ be a local ring of depth one and let $I$ be an $\fm$-primary ideal of $R$ (e.g., $R$ is a one-dimensional local domain and $I$ is a nonzero proper ideal of $R$). Assume $I$ is either integrally closed, or weakly $\fm$-full. If $I\otimes_{R}I^{\ast}$ is torsion-free, then $R$ is a DVR and $I$ is principal.
\end{cor}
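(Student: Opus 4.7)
The plan is to apply Corollary \ref{maincorollary} with $M=I$ and then invoke Remark \ref{rmk3}.

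First I would verify the hypotheses of Corollary \ref{maincorollary}. By Theorem \ref{CHKV}, the ideal $I$ is rigid-test, and in particular strongly rigid (as noted in Definition \ref{rt}). Since $I$ is $\fm$-primary and contains a nonzerodivisor (because $\depth(R)=1$ forces $\fm\notin\Ass(R)$), $I$ is torsion-free. Next, for any $\fp\in\Ass(R)$ we have $\fp\neq\fm$, hence $\fp\subsetneq\fm=\sqrt{I}$, which gives $I\not\subseteq\fp$ and therefore $I_{\fp}=R_{\fp}$, a free $R_{\fp}$-module.

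With all hypotheses verified and $I\otimes_R I^{\ast}$ assumed torsion-free, Corollary \ref{maincorollary} yields that $I$ is a free $R$-module, say $I\cong R^{n}$ for some $n\geq 1$. Localizing at any $\fp\in\Ass(R)$ and comparing with the equality $I_{\fp}=R_{\fp}$ established above forces $n=1$, so $I$ is principal. Finally, Remark \ref{rmk3} applied to the principal $\fm$-primary (integrally closed, or weakly $\fm$-full) ideal $I$ shows that $R$ is a DVR.

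The only mildly subtle point is the rank comparison that forces $n=1$; since $I$ may not have a rank in the sense used in the introduction when $R$ has embedded or multiple associated primes, the argument must proceed prime by prime as above rather than by a single global rank calculation. Everything else is a direct composition of the machinery already developed in this section.
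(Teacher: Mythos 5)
Your proof is correct and follows exactly the paper's route: the paper's own proof is the one-line citation of Theorem \ref{CHKV}, Corollary \ref{maincorollary}, and Remark \ref{rmk3}, and you have simply fleshed out the hypothesis-checking that those citations leave implicit. Two small remarks: the torsion-freeness of $I$ needs no argument at all, since any submodule of a free module (in particular any ideal of $R$) is torsion-free; and once $I$ is known to be free, it is automatically free of rank at most one because any two elements $a,b$ of an ideal satisfy the relation $b\cdot a - a\cdot b = 0$, so your localization argument for $n=1$, while valid, is slightly more elaborate than necessary.
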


\begin{proof} This follows from Theorem \ref{CHKV}, Corollary \ref{maincorollary} and Remark \ref{rmk3}
\end{proof}

\begin{rmk} \label{rmk2} Let $R$ be a one-dimensional local domain that is not regular, and let $I=x:_R\fm$ for some $0\neq x \in \fm$. Consider $I' = \frac{I}{x} \subseteq {\rm Q}(R)$, where ${\rm Q}(R)$ is total ring of fractions of $R$. Then $I'$ is a fractional ideal of $R$. Notice we have that $I^2=xI$; see \ref{CP}. Therefore it follows 
\begin{equation}\tag{\ref{rmk2}.1}
I'=\frac{I}{x}= \bigcup_{n\geq 1}\frac{I^n}{x^n}=R\left[\frac{I}{x}\right]. 
\end{equation}
Here the second equality is due to the fact that $I^2=xI$, and the third equality follows from the definition of $\displaystyle{R\left[\frac{I}{x}\right]}$. 
The equality in (\ref{rmk2}.1) implies that $I'$ is a module finite $R$-algebra. So we have $R \subseteq I' \subseteq \overline{R}$, where $\overline{R}$ is the integral closure of $R$. It follows that $(I')^{\ast} \cong R:_RI' = \fm$, and hence $I^{\ast} \cong \fm$. Notice $I$ is a weakly $\fm$-full ideal of $R$ so that $I\otimes_{R}I^{\ast}$ cannot be torsion-free; see Remark \ref{RM} and Corollary \ref{cor2}. However, since $I^{\ast} \cong \fm$, one can conclude that the tensor product $I\otimes_{R}I^{\ast}$ has torsion without appealing to Corollary \ref{cor2}; see, for example, \cite[page 842]{CON}. \qed
\end{rmk}

In view of Remark \ref{rmk2}, we next construct an example of a weakly $\fm$-full ideal $L$, where $L^{\ast} \ncong \fm$. This, in particular,  indicates that the conclusion of Corollary \ref{cor2} is not trivial.

\begin{eg} \label{ex4} Let $R=k[\!|t^9, t^{11}, t^{13}, t^{14}, t^{15}, t^{17}]\!]$ and set $L=I:_R\fm$, where $I=(t^{26}, t^{30},t^{32})$. Then $L$ is weakly $\m$-full. One can see that $L=(t^{26}, t^{30}, t^{32}, t^{34}, t^{36}, t^{38}, t^{42})$. Let $L'$ be the $R$-module generated by $1,t^4, t^6, t^8, t^{10}, t^{12}, t^{16}$. Then $L' \cong L$ and $t^{26}L'=L$. Note also that, since $R \subseteq L'$, we have $L^{\ast}=\Hom_R(L,R) \cong R:_{{\rm Q}(R)}L'=R:_{R}L' $.

Suppose now $L^{\ast}\cong \fm$, and seek a contradiction. It follows that $\fm \cong R:_{R}L'$ and hence $\fm L'\subseteq R$. However $t^{19}=t^{13}t^{6} \in \fm L'$, but $t^{19} \notin R$. Therefore, $L^{\ast} \ncong \fm$. Furthermore, since $L$ is weakly $\fm$-full and $R$ is not a DVR, we conclude from Corollary \ref{cor2} that $L\otimes_{R}L^{\ast}$ is not torsion-free, i.e., $L\otimes_{R}L^{\ast}$ has torsion.
\end{eg}

There are many examples in the literature supporting Conjecture \ref{conjHW}. For example, the conjecture is known to be true for ideals over numerical semigroup rings that have multiplicity at most seven; see \cite[1.7]{GTNL}. Notice, in Example \ref{ex4}, the numerical semigroup ring has multiplicity nine. Therefore, to our best knowledge, $L$ is a new example of an ideal supporting Conjecture \ref{conjHW}. Furthermore, it is a nontrivial example in the sense that $L^{\ast} \ncong \fm$; see Remark \ref{rmk2}.

\section{On integrally closed ideals}

This section is devoted to a proof of Theorem \ref{th1intro}. For our argument we will make use of the next two results; the first one is due to Auslander and follows from the proof of \cite[3.3]{Au}.

\begin{chunk} (Auslander; see \cite[3.3]{Au} and also \cite[5.2]{HW1}) \label{aus} Let $R$ be a local ring satisfying Serre's condition $(S_2)$ and let $M\in \md R$ be a torsion-free $R$-module. Assume $M_{\p}$ is a free $R_{\p}$-module for each prime ideal $\p$ of $R$ with $\height(\p) \leq 1$. If $M \otimes_RM^*$ is reflexive, then $M$ is free.
\end{chunk}

\begin{lem} \label{th1lem} Let $R$ be a Noetherian ring (not necessarily local) satisfying Serre's condition $(S_1)$, and let $I$ be an ideal of $R$. Assume $IR_{\fp}\cong R_{\fp}$ for some $\p \in \Ass_R(R/I)$. Then $R_{\fp}$ is a one-dimensional Cohen-Macaulay ring. If, furthermore, $I$ is integrally closed, then $R_{\fp}$ is a $\mathrm{DVR}$.
\end{lem}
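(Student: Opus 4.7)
The plan is to transfer the hypotheses to the local ring $R_\fp$, use the associated-prime condition to pin down depth and dimension, and then invoke integral closedness for the DVR strengthening.

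First, the isomorphism $IR_\fp\cong R_\fp$ forces $IR_\fp$ to be principal, say $IR_\fp=aR_\fp$, and $a$ must be a non-zerodivisor of $R_\fp$ (else multiplication by $a$ would not be injective, contradicting the isomorphism). Localizing $\fp\in\Ass_R(R/I)$ at $\fp$ gives $\fp R_\fp\in\Ass_{R_\fp}(R_\fp/aR_\fp)$; since $\fp R_\fp$ is the maximal ideal of $R_\fp$, this is equivalent to $\depth(R_\fp/aR_\fp)=0$. The depth lemma applied to the short exact sequence $0\to R_\fp\xrightarrow{a}R_\fp\to R_\fp/aR_\fp\to 0$ then yields $\depth R_\fp=1$. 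Since $a\in\fp R_\fp$ is a non-zerodivisor, $\fp R_\fp$ cannot be a minimal prime, so $\dim R_\fp\geq 1$. To obtain the matching upper bound $\dim R_\fp\leq 1$ (completing the one-dimensional Cohen--Macaulay conclusion), one uses the $(S_1)$ hypothesis together with Krull's principal ideal theorem applied to the non-zerodivisor $a$, arguing that $\fp R_\fp$ is minimal over $aR_\fp$ and hence has height exactly one.

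For the DVR strengthening, assume further that $I$ is integrally closed. Pick $b\in R_\fp\setminus aR_\fp$ with $\fp R_\fp\cdot b\subseteq aR_\fp$ (provided by the associated-prime hypothesis), and set $c=b/a$ in the total quotient ring of $R_\fp$; then $c\notin R_\fp$ but $c\cdot\fp R_\fp\subseteq R_\fp$. Split into cases. If $c\cdot\fp R_\fp\subseteq\fp R_\fp$, the determinant trick applied to the faithful finitely generated $R_\fp[c]$-module $\fp R_\fp$ (faithful because $a\in\fp R_\fp$ is a non-zerodivisor) shows $c$ is integral over $R_\fp$; clearing denominators then places $b$ in the integral closure of $aR_\fp=IR_\fp$, which is integrally closed (since integral closure commutes with localization and $I$ is integrally closed), forcing $b\in aR_\fp$, a contradiction. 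Hence $c\cdot\fp R_\fp$ must contain a unit of $R_\fp$: $cp=u$ for some $p\in\fp R_\fp$ and unit $u\in R_\fp$. Rearranging yields $a=u^{-1}bp$, so $aR_\fp=bpR_\fp$; comparing with $b\fp R_\fp\subseteq aR_\fp$ gives $b\fp R_\fp=bpR_\fp$. Since $a=u^{-1}bp$ is a non-zerodivisor, so are $b$ and $p$; cancelling $b$ yields $\fp R_\fp=pR_\fp$, principal and generated by a non-zerodivisor. A one-dimensional local ring with such a maximal ideal is a DVR.

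The main obstacle I foresee is the step $\dim R_\fp\leq 1$: one must rule out the possibility that $\fp R_\fp$ is an embedded (non-minimal) associated prime of $aR_\fp$ using only $(S_1)$ on $R$. Since $(S_1)$ on $R$ does not automatically transfer to $R/aR$, this step likely requires a finer argument combining the associated-prime hypothesis on $\fp$ with the specific structure of the element $b$ and the non-zerodivisor $a$.
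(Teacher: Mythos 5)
You have correctly identified a genuine gap, but it lies in the lemma's own statement rather than in your argument. With Serre's condition $(S_1)$ as written, the step $\dim R_\fp\leq 1$ cannot be carried out, and the conclusion actually fails. For instance, let $R=k[[x,y,z,w]]/\bigl((x,y)\cap(z,w)\bigr)$. Its only associated primes are the two minimal primes $(x,y)$ and $(z,w)$, so $R$ satisfies $(S_1)$; the element $a=x+z$ is a nonzerodivisor, $R/aR\cong k[[x,y,w]]/(x^2,xy,xw,yw)$ has depth zero, so with $I=(a)$ one has $\fm\in\Ass_R(R/I)$ and $IR_\fm=I\cong R$, yet $R_\fm=R$ has dimension two and depth one and is therefore not one-dimensional Cohen-Macaulay. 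The hypothesis should be $(S_2)$: this is the only place the lemma is invoked inside the paper, namely in Theorem 3.3 where $R$ is assumed $(S_2)$, and with $(S_2)$ the step you flagged is immediate, since $1=\depth R_\fp\geq\min(2,\dim R_\fp)$ forces $\dim R_\fp\leq 1$, whence $\dim R_\fp=\depth R_\fp=1$. So your instinct that $(S_1)$ does not control the embedded primes of $aR_\fp$ was exactly right; the fix is to strengthen the hypothesis, not to look for a cleverer argument under $(S_1)$.

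Up to that point your route coincides with the paper's (a principal generator, the depth computation from the associated-prime hypothesis, $(S_n)$ to cap the dimension). For the DVR strengthening you take a genuinely different path. The paper appeals to Remark 2.14, which rests on Theorem 2.11 (the Corso--Huneke--Katz--Vasconcelos rigidity of integrally closed $\fm$-primary ideals): since $IR_\fp$ is principal, $\pd_{R_\fp}(R_\fp/IR_\fp)\leq 1$, hence $\Tor_2^{R_\fp}(R_\fp/\fp R_\fp,R_\fp/IR_\fp)=0$, hence $\pd_{R_\fp}(R_\fp/\fp R_\fp)\leq 1$ and $R_\fp$ is regular. Your argument is instead the classical determinant-trick proof: with $c=b/a$, either $c\fp R_\fp\subseteq\fp R_\fp$, making $c$ integral over $R_\fp$ and thus placing $b$ in $\overline{aR_\fp}=\overline{IR_\fp}=IR_\fp=aR_\fp$ (a contradiction, using that integral closure of $I$ localizes), or $c\fp R_\fp$ contains a unit, which after the cancellation you describe (legitimate, since $a=u^{-1}bp$ forces $b$ to be a nonzerodivisor) shows $\fp R_\fp$ is principal and generated by a nonzerodivisor, so $R_\fp$ is a DVR. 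This is correct and more elementary than the paper's route, avoiding the Tor-rigidity theorem entirely, at the cost of being somewhat longer.
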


\begin{proof} Note that $\depth_{R_{\p}}(R_{\p}/IR_{\p})=0$. Since $IR_{\fp}$ is principal, we have $\depth(R_{\fp})=1$. As $R$ satisfies $(S_1)$, we conclude that $R_{\fp}$ is a one-dimensional Cohen-Macaulay ring. Since $I R_{\fp}$ is an integrally closed $\fp R_{\fp}$-primary ideal of $R_{\fp}$, we conclude from Remark \ref{rmk3} that $R_{\fp}$ is a DVR.
\end{proof}

Recall that the \emph{Picard group} $\operatorname{Pic}R$ of a Noetherian ring $R$ consists of the isomorphism classes $[M]$ of finitely generated projective $R$-modules $M$ such that $M_{\p} \cong R_{\p} $ for all $ \p \in \Spec(R)$; see, for example, \cite[11.3]{Ei}.

\begin{thm}\label{th1} Let $R$ be a Noetherian ring (not necessarily local) and let $I$ be an ideal of $R$ of positive height. Assume $R$ satisfies Serre's condition $(S_2)$. Then the following conditions are equivalent.
\begin{enumerate}[\rm(i)]
\item $I$ is integrally closed and $[I] \in \operatorname{Pic}R$. 
\item $R_\p$ is a $\mathrm{DVR}$ for every $\p \in \Ass_R(R/I)$ and $[I] \in \operatorname{Pic}R$.
\item $I$ is integrally closed and $I\otimes_RI^*$ is reflexive.
\end{enumerate}
Moreover, if one of the equivalent conditions holds, then a primary decomposition of $I$ is of the form $$I = \bigcap_{\p \in \Ass_R(R/I)}\p^{(n(\p))}$$ where $n(\p)\geq 1$ for every $\p \in \Ass_{R}(R/I)$ and $\fp^{(n(\p))}$ denotes a symbolic power of $\fp$.
\end{thm}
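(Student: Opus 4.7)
The plan is to prove the cycle (i) $\Rightarrow$ (iii) $\Rightarrow$ (ii) $\Rightarrow$ (i), picking up the ``moreover'' clause along the way. Two of the implications are short. For (i) $\Rightarrow$ (iii), the hypothesis $[I] \in \operatorname{Pic}R$ makes $I$ projective of rank one locally, so $I \otimes_R I^*$ is locally free of rank one and hence reflexive. For (ii) $\Rightarrow$ (i), the fact that $R_\p$ is a DVR at each $\p \in \Ass_R(R/I)$ forces every associated prime of $R/I$ to have height one; combined with $\height(I) \geq 1$, these must be the minimal primes of $I$, so $I$ has no embedded components and the minimal primary decomposition reads $I = \bigcap_{\p \in \Ass_R(R/I)} (IR_\p \cap R)$. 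The hypothesis $[I] \in \operatorname{Pic}R$ makes each $IR_\p$ principal in the DVR $R_\p$, yielding $IR_\p = (\p R_\p)^{n(\p)}$ for some $n(\p) \geq 1$ and hence $IR_\p \cap R = \p^{(n(\p))}$; this delivers the displayed primary decomposition. Because every ideal of a DVR is integrally closed, each $\p^{(n(\p))}$ is the contraction of an integrally closed ideal and hence integrally closed, and so is the intersection $I$.

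The main direction is (iii) $\Rightarrow$ (ii). The strategy is to show that $I_\p$ is free of rank one over $R_\p$ for every $\p \in \Spec R$, which produces $[I] \in \operatorname{Pic}R$ at once. Integral closedness and reflexivity both descend to localizations, and $R_\p$ remains $(S_2)$. If $\height(\p) = 0$, then $I \not\subseteq \p$ by positive height, so $I_\p = R_\p$. If $\height(\p) = 1$, then $(S_2)$ makes $R_\p$ Cohen-Macaulay of dimension one; the nontrivial case is $I_\p$ proper, and since $I$ contains a non-zero-divisor in the $(S_1)$ ring $R$, $I_\p$ is a $\p R_\p$-primary integrally closed ideal. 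Reflexivity of $I_\p \otimes_{R_\p} (I_\p)^*$ implies torsion-freeness (via torsionlessness), so Corollary \ref{cor2} gives that $R_\p$ is a DVR and $I_\p$ is principal. If $\height(\p) \geq 2$, I plan to invoke Auslander's criterion \ref{aus} for $I_\p$ over $R_\p$: it is torsion-free, $I_\p \otimes_{R_\p} (I_\p)^*$ remains reflexive, and at every $\q \in \Spec R_\p$ with $\height(\q) \leq 1$ the module $(I_\p)_\q = I_{\q'}$ (where $\q = \q' R_\p$, so $\height(\q') = \height(\q)$) is free by the two cases already dispatched, applied to $\q'$ in $R$. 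Thus $I_\p$ is free; since it is a nonzero submodule of $R_\p$ (it contains the image of any non-zero-divisor of $I$) and free submodules of a local ring have rank at most one, $I_\p$ is free of rank one. Finally, Lemma \ref{th1lem} applied at each $\p \in \Ass_R(R/I)$, where $I_\p \cong R_\p$ is still integrally closed, gives that $R_\p$ is a DVR, completing (ii).

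The main obstacle I anticipate is the bookkeeping in the Auslander step: verifying that heights are preserved under localization so that the two previously handled cases really cover the hypothesis of \ref{aus}, and ensuring that $I_\p$ cannot vanish so that freeness yields rank one rather than rank zero. Both points rest on the interaction of $\height(I) \geq 1$ with the $(S_1)$ and $(S_2)$ properties of $R$.
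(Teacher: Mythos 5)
Your proof is correct and, for the main implication $(\mathrm{iii}) \Rightarrow (\mathrm{ii})$, follows the paper's strategy essentially verbatim: handle primes of height $\leq 1$ via Corollary~\ref{cor2}, then bootstrap to arbitrary localizations via Auslander's criterion~\ref{aus} to land $[I]$ in $\operatorname{Pic}R$, and finish with Lemma~\ref{th1lem}. Your bookkeeping on heights under localization and on nonvanishing of $I_\p$ is sound.

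The only genuine (minor) divergence is in how you recover integral closedness from condition $(\mathrm{ii})$. You first derive the primary decomposition $I = \bigcap_{\p} \p^{(n(\p))}$ (observing that height-one associated primes with $\height(I)\geq 1$ rules out embedded components), and then argue that each $\p^{(n(\p))}$ is the contraction of an ideal of a DVR, hence integrally closed, and that an intersection of integrally closed ideals is integrally closed. The paper instead argues by contradiction at the level of modules: if $\overline I / I \neq 0$, pick $\p \in \Ass_R(\overline I/I) \subseteq \Ass_R(R/I)$; then $R_\p$ is a DVR, so $IR_\p = \overline{IR_\p} = \overline I R_\p$, contradicting $(\overline I/I)_\p \neq 0$. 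Both arguments are correct and of comparable length; yours has the small advantage of establishing the ``moreover'' clause in the same breath and then harvesting integral closedness from it, whereas the paper proves the decomposition separately at the end.
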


\begin{proof}
$(ii) \Rightarrow (iii)$: Let $\overline{I}$ denote the integral closure of $I$. Suppose that $\overline{I}/I \ne (0)$ and choose $\p \in \Ass_{R}(\overline{I}/I)$. Then $\fp \in \Ass_R(R/I)$ so that $R_{\p}$ is a $\mathrm{DVR}$ by assumption. Hence $IR_\p = \overline{IR_\p} = \overline{I}R_\p$. This is a contradiction since $(\overline{I}/I)_{\p} \ne (0)$. Thus $\overline {I} = I$, i.e., $I$ is integrally closed.
Note that, since $[I] \in \operatorname{Pic}R$, $I$ is projective. Since $R$-duals and tensor products of projective modules are projective, and projective modules are reflexive, we conclude that $I\otimes_RI^{\ast}$ is reflexive.

$(iii) \Rightarrow (ii)$: We start the proof by proving the following claim.\\
\textbf{Claim.} Let $\fp \in \Spec(R)$ with $\height(\fp) \leq 1$. Then $IR_{\fp} \cong R_{\fp}$ and $\Supp_R(I)=\Spec(R)$.\\
\textbf{Proof of the claim}: Assuming $IR_{\fp} \cong R_{\fp}$ for all $\fp \in \Spec(R)$ with $\height(\fp) \leq 1$, we have that $\Supp_R(I)=\Spec(R)$: this is because each prime ideal of $R$ contains a minimal prime, which supports the $R$-module $I$. So we will prove $IR_{\fp} \cong R_{\fp}$ for each $\fp \in \Spec(R)$ with $\height(\fp) \leq 1$. This is clear if $I \not\subseteq \p$. Hence we assume $I \subseteq \p$. 

Since $I \subseteq \p$ and $I$ has positive height, we have $\dim R_\p = 1$. Since $R$ satisfies $(S_2)$, we conclude that $R_\p$ is a one-dimensional Cohen-Macaulay local ring. Moreover $IR_{\fp}$ has positive height since $I$ has positive height. In particular $\height_{R_{\fp}}(IR_{\fp})=1$ so that $\sqrt{IR_\p} = \p R_\p$.

As $I$ is integrally closed, $IR_\p$ is an integrally closed ideal of $R_{\fp}$. Morever $IR_\p \otimes_{R_\p}(IR_\p)^{*}$ is torsionfree over $R_{\p}$. Thus we see from Corollary \ref{cor2} that $IR_\p$ is principal, i.e., $IR_{\fp} \cong R_\p$. This proves the claim. $\blacksquare$

Now we proceed to show $[I] \in \operatorname{Pic}R$ by using \ref{aus}. For that fix a prime ideal $\fq$ of $R$. Pick $P\in \Spec(R_{\fq})$ with $\dim(R_P)\leq 1$.
Then $P=\fp R_{\fq}$ for some $\fp \in \Spec(R)$ with $\dim(R_{\fp})=\height(\fp) \leq 1$. It follows from the claim that $(IR_\fq)_{P}\cong IR_{\fp}\cong R_{\fp}$. Moreover $IR_{\fq}\otimes_{R_{\fq}}(IR_{\fq})^{\ast}$ is reflexive. So \ref{aus} implies $IR_{\fq}$ is free over $R_{\fq}$. Since $\Supp_R(I)=\Spec(R)$ by the claim, we see $IR_{\fq}\cong R_{\fq}$. This shows $I$ is projective, i.e., $[I] \in \operatorname{Pic}R$. 

Now let $\p \in \Ass_R(R/I)$. Since $[I] \in \operatorname{Pic}R$, we have $IR_{\fp}\cong R_{\fp}$. Thus, by Lemma \ref{th1lem}, we see $R_{\fp}$ is a one-dimensional Cohen-Macaulay ring. As $IR_{\fp}$ is  principal, i.e., free, we have $\Tor_{i}^{R_{\fp}} (R_{\fp}/\fp R_{\fp}, R_{\fp}/ IR_{\fp})=0$ for all $i\geq 2$.
As $IR_{\fp}$ is integrally closed and $\fp R_{\fp}$-primary, we deduce from Theorem \ref{CHKV} that $R_\p$ is a $\mathrm{DVR}$. This completes the proof of $(iii) \Rightarrow (ii)$.

$(ii) \Rightarrow (i)$: Since (ii) implies (iii), we see that (ii) implies (i).  

$(i) \Rightarrow (ii)$: This implication follows from Lemma \ref{th1lem}.

For the last assertation on the primary decomposition of $I$, let $\p \in \Ass_R(R/I)$. Then, since $R_{\fp}$ is a $\mathrm{DVR}$, we have $IR_\p = \p^{n(\p)}R_{\p}$ for some $n(\p)\geq 1$. Thus $IR_\p \cap R = \p^{(n(\p))}$, and hence the result follows.
\end{proof}


\section{On a question related to Conjecture \ref{conjHW}} 

Goto, Takahashi, Taniguchi and Truong \cite{GTNL} considered Conjecture \ref{conjHW} for ideals $I$ by replacing the tensor product $I\otimes_{R}I^{\ast}$ with $I \otimes_{R} I^{\dagger}$, where $\omega$ is the canonical module and $(-)^{\dagger}=\Hom(-, \omega)$. They proved that, this new version of Conjecture \ref{conjHW}, fails over a one-dimensional numerical semigroup ring; see \cite[7.3]{GTNL}. However the ring considered in their example does not have minimal multiplicity; recall that a Cohen-Macaulay local ring $R$ is said to have {\em minimal multiplicity} if $\e(R)=\edim R-\dim R+1$, where $\e(R)$ and $\edim R$ stand for the multiplicity of $R$ (with respect to $\fm$) and the embedding dimension of $R$, respectively. Therefore the following question still remains open:

\begin{ques} \label{qn1} Let $R$ be a one-dimensional local domain with a canonical module $\omega$, and let $I$ be an ideal of $R$. Assume $R$ has minimal multiplicity. If $I \otimes_{R} I^{\dagger}$ is torsion-free, then must $I\cong R$  or $I\cong \omega$? 
\end{ques}

In this section we consider a version of Question \ref{qn1}, and look at maximal Cohen-Macaulay modules $M$ such that $M\otimes_RM^{\dagger} \cong \omega$. More precisely, we ask:

\begin{ques} \label{qn2} Let $R$ be a Cohen-Macaulay local ring with a canonical module $\omega$. Assume $R$ has minimal multiplicity. If $M$ is a maximal Cohen-Macaulay $R$-module such that $M\otimes_RM^{\dagger} \cong \omega$, then must $M \cong R$ or $M \cong \omega$?
\end{ques}

Note that, if $M\otimes_RM^{\dagger} \cong \omega$, then $M\otimes_RM^{\dagger}$ is torsion-free, but somewhat surprisingly this isomorphism does not necessarily force $M \cong R$ or $M\cong \omega$, in general. For example, if $R=k[\!|t^9, t^{10}, t^{11}, t^{12}, t^{15}]\!]$ and $I=R+Rt$, then $I\otimes_RI^{\dagger} \cong \omega$, where $\omega=R+Rt+Rt^3+Rt^4$; see \cite[2.5]{GT}. 

The following is our main result in this section; it gives a partial affirmative answer to Question \ref{qn2}.

\begin{thm}\label{1}
Let $R$ be a Cohen-Macaulay local ring with a canonical module $\omega$. Assume $R$ has minimal multiplicity. Let $M$ be a first syzygy of a maximal Cohen-Macaulay $R$-module.
If $M\otimes_RM^{\dagger}\cong \omega$, then $M\cong R$.
\end{thm}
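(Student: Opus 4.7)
The plan is to reduce the problem to an Artinian quotient in which $\fm^{2}=0$ and then apply Krull-Schmidt. First, after replacing $R$ by the faithfully flat extension $R[y]_{\fm R[y]}$ if necessary, I may assume the residue field is infinite, so there is a minimal reduction $\bsx=x_{1},\dots,x_{d}$ of $\fm$; the minimal multiplicity hypothesis then gives $\fm^{2}=\bsx\fm$. Since $M$ is a first syzygy of an MCM module, $M$ itself is maximal Cohen-Macaulay, as is $M^{\dagger}$, so $\bsx$ is a regular sequence on each of $R$, $M$, $M^{\dagger}$, and $\omega$. Writing $\bar{(-)}$ for reduction modulo $\bsx$, the standard base-change identities yield $M^{\dagger}\otimes_{R}\bar R\cong \Hom_{\bar R}(\bar M,\bar\omega)=:\bar M^{\dagger}$ and $(M\otimes_{R}M^{\dagger})\otimes_{R}\bar R\cong \bar M\otimes_{\bar R}\bar M^{\dagger}$, so the hypothesis descends to $\bar M\otimes_{\bar R}\bar M^{\dagger}\cong \bar\omega$ over the Artinian ring $\bar R$, which satisfies $\bar\fm^{2}=0$ and has canonical module $\bar\omega$.

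Next, I would use that $M$ is a first syzygy to constrain the structure of $\bar M$. From an exact sequence $0\to M\to F\to N\to 0$ with $F$ free and $N$ MCM, splitting off the free summands of $F$ not needed for a minimal cover of $N$ yields $M\cong M_{0}\oplus R^{t}$ with $M_{0}\subseteq \fm F_{0}$ for some free $F_{0}$. Reducing modulo $\bsx$ gives $\bar M\cong \bar{M_{0}}\oplus \bar R^{t}$ with $\bar{M_{0}}\subseteq \bar\fm\bar F_{0}$, and the identity $\bar\fm^{2}=0$ forces $\bar\fm\cdot \bar{M_{0}}=0$. Consequently $\bar{M_{0}}\cong k^{s}$ for some $s$, and
$$\bar M\;\cong\; k^{s}\oplus \bar R^{t}\qquad \text{for integers }s,t\geq 0.$$

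A direct calculation now pins down $s$ and $t$. Using $\Hom_{\bar R}(k,\bar\omega)\cong \Soc(\bar\omega)\cong k$ (because $\bar\omega$ is the injective hull of $k$ over the Artinian ring $\bar R$) and $\Hom_{\bar R}(\bar R,\bar\omega)\cong \bar\omega$, we get $\bar M^{\dagger}\cong k^{s}\oplus \bar\omega^{t}$, and expanding the tensor product gives
$$\bar M\otimes_{\bar R}\bar M^{\dagger}\;\cong\; k^{\,s^{2}+st(r+1)}\oplus \bar\omega^{t^{2}},$$
where $r$ is the Cohen-Macaulay type of $R$ (so $\bar\omega/\bar\fm\bar\omega\cong k^{r}$). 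Since $\bar\omega$ is indecomposable and, in the non-regular case, not isomorphic to $k$, Krull-Schmidt applied to the isomorphism with $\bar\omega$ forces $t^{2}=1$ and $s^{2}+st(r+1)=0$, hence $t=1$ and $s=0$; the regular case $\bar R=k$ collapses to the trivial count $(s+t)^{2}=1$, again giving $\bar M\cong \bar R$.

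Finally, since $\bar M$ is free of rank one, a choice of generator yields a surjection $R\twoheadrightarrow M$ with kernel $K$; as $M$ is MCM and $\bsx$ is $M$-regular, $\Tor_{1}^{R}(\bar R,M)=0$, and the induced exact sequence $0\to K/\bsx K\to \bar R\to \bar M\to 0$ combined with $\bar R\cong \bar M$ shows $K/\bsx K=0$, whence $K=0$ by Nakayama. Thus $M\cong R$. The principal difficulty is the structural step isolating $\bar M\cong k^{s}\oplus \bar R^{t}$, since this is exactly where the ``first syzygy'' assumption on $M$ and the minimal multiplicity assumption on $R$ are jointly used; once that decomposition is in hand, the rest is essentially a bookkeeping exercise in Krull-Schmidt.
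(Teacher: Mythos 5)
Your proof is correct and follows essentially the same route as the paper's: reduce to the Artinian quotient $\bar R=R/\fq$ with $\bar\fm^2=0$, use the first-syzygy hypothesis to decompose $\bar M\cong k^s\oplus\bar R^t$, apply Krull--Schmidt to pin down $(s,t)$, and lift back to $M\cong R$. The only cosmetic difference is in the counting step, where the paper first applies Matlis duality to convert the hypothesis into $\Hom_{\bar R}(\bar M,\bar M)\cong\bar R$ and then compares with $\bar R$, whereas you compute $\bar M\otimes_{\bar R}\bar M^\dagger$ directly and compare with $\bar\omega$; both yield the same conclusion.
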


\begin{proof} We may assume, by replacing $R$ by $R[t]_{\m R[t]}$ with an indeterminate $t$ over $R$, that the residue field $k$ of $R$ is infinite. Therefore, since $R$ has minimal multiplicity, there exists a parameter ideal $\fq$ of $R$ such that $\m^2=\fq\m$; see, \cite[Theorem 1]{Sally}.
There are isomorphisms:
\begin{align*}
\omega/\fq \omega &\cong(M\otimes_RM^{\dagger})\otimes_RR/\fq\\
&\cong M/\fq M\otimes_{R/\fq}(M^{\dagger}\otimes_RR/\fq)\\
&\cong M/\fq M\otimes_{R/\fq}\Hom_{R/\fq}(M/\fq M, \omega /\fq \omega),
\end{align*}
Here the last isomorphism follows from \cite[3.3.3(a)]{BH} since $M$ is maximal Cohen-Macaulay and $\Ext^i_R(M,\omega)=0$ for all $i\geq 1$.
As $\omega /\fq \omega$ is isomorphic to the injective hull $\E_{R/\fq }(k)$, we obtain:
$$
E_{R/\fq}(k)\cong M/\fq M\otimes_{R/\fq}\Hom_{R/\fq}(M/\fq M,\E_{R/\fq }(k)).
$$
Taking the Matlis dual of this isomorphism over $R/\fq$, we see:
\begin{align*}
R/\fq &\cong\Hom_{R/\fq}(M/\fq M\otimes_{R/\fq}\Hom_{R/\fq}(M/\fq M,\E_{R/\fq}(k)),\E_{R/\fq}(k))\\
&\cong\Hom_{R/\fq}(M/\fq M,\Hom_{R/ \fq}(\Hom_{R/\fq}(M/\fq M,\E_{R/\fq}(k)),\E_{R/\fq}(k)))\\
&\cong\Hom_{R/\fq}(M/\fq M,M/\fq M).
\end{align*}
Now, by the hypothesis, there is an exact sequence
$$
0 \to M \to R^{\oplus a} \to N \to 0
$$
of $R$-modules such that $N$ is maximal Cohen--Macaulay.
Tensoring this short exact sequence with $R/\fq$, we have an exact sequence:
$$
0 \to \Tor_1^R(R/\fq,N) \to M/\fq M \to (R/\fq)^{\oplus a}.
$$
Notice $\Tor_1^R(R/\fq,N)$ is isomorphic to the first Koszul homology of $N$ with respect to the minimal system of generators of $\fq$, which is a regular sequence on $N$.
Thus $\Tor_1^R(R/\fq,N)$ vanishes, and this yields the injection $M/\fq M \hookrightarrow (R/\fq)^{\oplus a}$.
Therefore there is an isomorphism
$$
M/\fq M\cong L\oplus(R/\fq)^{\oplus b},
$$
where $L$ is a submodule of $\m(R/\fq)^{\oplus(a-b)}$.
Since $\m^2=\fq \m$, the module $L$ is a $k$-vector space so that $L\cong k^{\oplus c}$ for some $c\ge0$.
Now we obtain the isomorphisms:
\begin{align*}
R/\fq&\cong\Hom_{R/\fq}(M/\fq M,M/\fq M)\\
&\cong\Hom_{R/\fq}(k^{\oplus c}\oplus(R/\fq)^{\oplus b},k^{\oplus c}\oplus(R/\fq)^{\oplus b})\\
&\cong k^{\oplus(c^2+bc+bcr)}\oplus(R/\fq)^{\oplus b^2},
\end{align*}
where $r$ denotes the type of $R$.

Note that $R/\fq$ and $k$ are indecomposable $R/\fq$-modules, and that decomposition of each $R/\fq$-module into indecomposable $R/\fq$-modules is unique up to isomorphisms.
Hence we have either $(b,c)=(0,1)$ or $(b,c)=(1,0)$.
In both cases we obtain an isomorphism $M/\fq M\cong R/\fq$.
(In the former case, we also have $\fq=\m$ so that $R$ is regular).
Now applying \cite[1.3.5]{BH} repeatedly, we conclude that $M\cong R$.
\end{proof}

We finish this section with two corollaries of Theorem \ref{1}.

\begin{cor}\label{2}
Let $R$ be a $d$-dimensional Cohen--Macaulay local ring with a canonical module $\omega$. Assume $R$ has minimal multiplicity. Let $M$ be a $(d+1)$st syzygy of a finitely generated $R$-module.
If $M\otimes_RM^{\dagger}\cong \omega$, then $M\cong R$.
\end{cor}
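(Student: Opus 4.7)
The plan is to reduce the corollary immediately to Theorem \ref{1} by exhibiting $M$ as a first syzygy of a maximal Cohen--Macaulay module. Since $M$ is a $(d+1)$st syzygy of some finitely generated $R$-module $X$, I would begin by choosing a free resolution
$$
\cdots \lra F_{d+1} \xra{\partial_{d+1}} F_d \xra{\partial_d} F_{d-1} \lra \cdots \lra F_0 \lra X \lra 0
$$
with $M=\ker\partial_d$, and set $N=\im\partial_d$, the $d$th syzygy of $X$ appearing one step lower. The short exact sequence
$$
0 \lra M \lra F_d \lra N \lra 0
$$
then displays $M$ as a first syzygy of $N$, so it remains only to confirm that $N$ is maximal Cohen--Macaulay.

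For that I would apply the depth lemma iteratively to the short exact sequences $0\to K_{i+1}\to F_i\to K_i\to 0$ coming from the resolution, where $K_i$ denotes the $i$th syzygy of $X$ and $K_0=X$. Standard bookkeeping gives $\depth_R K_i\ge \min(d,\depth_R X+i)$ for every $i\ge 0$, so taking $i=d$ yields $\depth_R N\ge d=\dim R$. Hence $N$ is maximal Cohen--Macaulay, as needed.

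At this point $M$ is a first syzygy of a maximal Cohen--Macaulay module, the hypothesis $M\otimes_R M^{\dagger}\cong \omega$ is carried over unchanged, and Theorem \ref{1} applies verbatim to force $M\cong R$. I do not anticipate any genuine obstacle: the entire content of the statement is already packaged in Theorem \ref{1}, and this corollary merely records the routine fact that, over a $d$-dimensional Cohen--Macaulay local ring, the syzygy operation pumps depth up to its maximal value after $d$ steps, so every $(d+1)$st syzygy is automatically a first syzygy of a maximal Cohen--Macaulay module.
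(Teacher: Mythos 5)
Your proof is correct and is essentially the paper's own argument: the paper likewise observes that $M$ is a first syzygy of the $d$th syzygy of $X$, which is maximal Cohen--Macaulay by the standard depth lemma bookkeeping, and then invokes Theorem \ref{1}. You have simply spelled out the depth computation that the paper cites as a known fact.
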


\begin{proof} The assertion follows from Proposition \ref{1} since a $d$-th syzygy of a finitely generated $R$-module is a maximal Cohen--Macaulay $R$-module.
\end{proof}

\begin{cor}\label{corend}
Let $R$ be a one-dimensional Cohen--Macaulay local ring with a canonical module $\omega$. Assume $R$ has minimal multiplicity. If $I$ is a reflexive ideal of $R$ and $I\otimes_RI^{\dagger} \cong \omega$, then $I \cong R$.
\end{cor}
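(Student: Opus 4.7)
The strategy is to derive Corollary \ref{corend} directly from Theorem \ref{1} by showing that any reflexive ideal $I$ of a one-dimensional Cohen-Macaulay local ring $R$ is a first syzygy of some maximal Cohen-Macaulay $R$-module. Once this reduction is in place, the hypothesis $I \otimes_R I^{\dagger} \cong \omega$ passes through Theorem \ref{1} unchanged and yields $I \cong R$.

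To produce the required short exact sequence, I would choose a finite free presentation
\[ R^{\oplus a} \xrightarrow{\varphi} R^{\oplus b} \to I^{*} \to 0 \]
of the $R$-dual $I^{*}$, and then apply $\Hom_R(-,R)$ to it. This yields the left exact sequence $0 \to I^{**} \to R^{\oplus b} \xrightarrow{\varphi^{*}} R^{\oplus a}$. Set $N := \im(\varphi^{*}) \subseteq R^{\oplus a}$. Since $I$ is reflexive, the natural evaluation map $I \to I^{**}$ is an isomorphism, so this construction produces a short exact sequence
\[ 0 \to I \to R^{\oplus b} \to N \to 0, \]
which exhibits $I$ as a first syzygy of $N$.

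It remains to confirm that $N$ is maximal Cohen-Macaulay. Being a submodule of the free module $R^{\oplus a}$, $N$ is torsion-free, so $\Ass_R(N) \subseteq \Ass_R(R)$. Because $R$ is a one-dimensional Cohen-Macaulay local ring, every associated prime of $R$ is minimal, and hence $\fm \notin \Ass_R(N)$. This forces $\depth_R(N) \geq 1 = \dim R$, so $N$ is indeed maximal Cohen-Macaulay. Theorem \ref{1} then applies with $M = I$, and we conclude $I \cong R$.

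The main subtlety I foresee is precisely the identification of $I$ as a first syzygy of an MCM module; dualizing a finite free presentation of $I^{*}$ is the natural source of such a sequence, but one must be careful to take as $N$ the image of the dualized map inside $R^{\oplus a}$ rather than its cokernel, since only the image is automatically torsion-free (and therefore maximal Cohen-Macaulay in dimension one). Beyond this observation, the argument is essentially a formal application of Theorem \ref{1}.
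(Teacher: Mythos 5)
Your proposal is correct and follows essentially the same route as the paper: dualize a free presentation of $I^{*}$, use reflexivity of $I$ to identify $I^{**}$ with $I$, and thereby exhibit $I$ as a first syzygy of a maximal Cohen--Macaulay module. The paper phrases this as ``$I$ is a second syzygy of $\coker(f^{*})$'' and invokes Corollary~\ref{2} (the $(d+1)$-st syzygy statement with $d=1$), whereas you verify directly that $N=\im(\varphi^{*})$ is MCM in dimension one and apply Theorem~\ref{1}; these are the same observation unpacked to different depths.
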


\begin{proof} Let $Q \xrightarrow{f} P \to I^* \to 0$ be a presentation of the $R$-module $I^*$ by finitely generated free $R$-modules $P$ and $Q$, where $I^*=\Hom_R(I,R)$.
Since $I$ is reflexive, dualizing this presentation by $R$, we obtain the exact sequence $0 \to I \to P^* \xrightarrow{f^*} Q^*$. Hence $I$ is a second syzygy of the cokernel of $f^*$, and so Corollary \ref{2} completes the proof.
\end{proof}

\section*{Acknowledgments}
The authors are grateful to C\u{a}t\u{a}lin Ciuperc\u{a} for useful discussions on weakly $\fm$-full ideals. The authors are also grateful to the referee for his/her careful reading, and for many helpful comments, corrections and suggestions.

Part of this work was completed when Celikbas visited the Meiji University in May and June 2017, supported by the JSPS Grant-in-Aid for Scientific Research (C) 26400054. He is grateful for the kind hospitality of the Meiji Department of Mathematics, and for the generous support of the JSPS grant.

Goto was partially supported by the JSPS Grant-in-Aid for Scientific Research (C) 16K05112.

Takahashi was partially supported by the JSPS Grant-in-Aid for Scientific Research (C) 16K05098. 

Taniguchi was partially supported by the JSPS Grant-in-Aid for Young Scientists (B) 17K14176.

\bibliography{a}

\providecommand{\bysame}{\leavevmode\hbox to3em{\hrulefill}\thinspace}
\providecommand{\MR}{\relax\ifhmode\unskip\space\fi MR }
\providecommand{\MRhref}[2]{%
  \href{http://www.ams.org/mathscinet-getitem?mr=#1}{#2}
}
\providecommand{\href}[2]{#2}
\begin{thebibliography}{10}

\bibitem{Au}
Maurice Auslander, \emph{Modules over unramified regular local rings}, Illinois
  J. Math. \textbf{5} (1961), 631--647. \MR{MR0179211 (31 \#3460)}

\bibitem{AuBr}
Maurice Auslander and Mark Bridger, \emph{Stable module theory}, Memoirs of the
  American Mathematical Society, No. 94, American Mathematical Society,
  Providence, R.I., 1969. \MR{MR0269685 (42 \#4580)}

\bibitem{BH}
Winfried Bruns and J\"{u}rgen Herzog, \emph{Cohen-{M}acaulay rings}, Cambridge
  Studies in Advanced Mathematics, vol.~39, Cambridge University Press,
  Cambridge, 1993. \MR{MR1251956 (95h:13020)}

\bibitem{CGSZ}
Olgur Celikbas, Mohsen Gheibi, Majid~Rahro Zargar, and Arash Sadeghi,
  \emph{Homological dimensions of rigid modules}, to appear in Kyoto Journal of
  Mathematics; posted at arXiv:1405.5188 (2014).

\bibitem{CIST}
Olgur Celikbas, Kei-ichiro Iima, Arash Sadeghi, and Ryo Takahashi, \emph{On the
  ideal case of a conjecture of {A}uslander and {R}eiten}, Bulletin des
  Sciences Math{\'e}matiques \textbf{142} (2018), 94--107.

\bibitem{CT2}
Olgur Celikbas and Ryo Takahashi, \emph{On the second rigidity theorem of
  {H}uneke and {W}iegand}, preprint; posted at arXiv:1805.02204.

\bibitem{CW}
Olgur Celikbas and Roger Wiegand, \emph{Vanishing of {T}or, and why we care
  about it}, J. Pure Appl. Algebra \textbf{219} (2015), no.~3, 429--448.

\bibitem{CON}
Petra Constapel, \emph{Vanishing of {T}or and torsion in tensor products},
  Communications in Algebra \textbf{24} (1996), no.~3, 833--846.

\bibitem{CHKV}
Alberto Corso, Craig Huneke, Daniel Katz, and Wolmer~V. Vasconcelos,
  \emph{Integral closure of ideals and annihilators of homology}, Commutative
  algebra, Lect. Notes Pure Appl. Math., vol. 244, Chapman \& Hall/CRC, Boca
  Raton, FL, 2006, pp.~33--48. \MR{2184788 (2006h:13015)}

\bibitem{CAPC}
Alberto Corso and Claudia Polini, \emph{Links of prime ideals and their {R}ees
  algebras}, J. Algebra \textbf{178} (1995), no.~1, 224--238.

\bibitem{DEL}
Hailong Dao, Mohammad Eghbali, and Justin Lyle, \emph{Hom and {E}xt,
  revisited}, preprint; posted at arxiv:1710.05123.

\bibitem{LongLC}
Hailong Dao, Jinjia Li, and Claudia Miller, \emph{On the (non)rigidity of the
  {F}robenius endomorphism over {G}orenstein rings}, Algebra Number Theory
  \textbf{4} (2010), no.~8, 1039--1053. \MR{2832633 (2012i:13010)}

\bibitem{Ei}
David Eisenbud, \emph{Homological algebra on a complete intersection, with an
  application to group representations}, Trans. Amer. Math. Soc. \textbf{260}
  (1980), no.~1, 35--64. \MR{MR570778 (82d:13013)}

\bibitem{Goto1}
Shiro Goto, \emph{Integral closedness of complete-intersection ideals.}, J.
  Algebra \textbf{108} (1987), 151--160.

\bibitem{GT}
Shiro Goto and Ryo Takahashi, \emph{On the {A}uslander--{R}eiten conjecture for
  {C}ohen--{M}acaulay local rings}, Proceedings of the American Mathematical
  Society \textbf{145} (2017), no.~8, 3289--3296.

\bibitem{GTNL}
Shiro Goto, Ryo Takahashi, Naoki Taniguchi, and Hoang Le~Truong,
  \emph{Huneke-{W}iegand conjecture and change of rings}, J. Algebra
  \textbf{422} (2015), 33--52.

\bibitem{GS}
Daniel~R. Grayson and Michael~E. Stillman, \emph{Macaulay2, a software system
  for research in algebraic geometry}, available at
  \url{https://faculty.math.illinois.edu/Macaulay2/}.

\bibitem{Her}
Kurt~D. Herzinger, \emph{Minimal generating sets of ideals and torsion in {${I
  \otimes_R I^{-1}}$}}, Ph.D. Thesis, University of Nebraska at Lincoln (1996).

\bibitem{HW1}
Craig Huneke and Roger Wiegand, \emph{Tensor products of modules and the
  rigidity of {T}or}, Math. Ann. \textbf{299} (1994), no.~3, 449--476.
  \MR{MR1282227 (95m:13008)}

\bibitem{MT}
Tadayuki Matsuoka, \emph{On the torsion submodule of a module of type
  {${\textrm (F_1)}$}}, Journal of Science of the Hiroshima University, Series
  AI (Mathematics) \textbf{31} (1967), no.~2, 151--160.

\bibitem{Sally}
Judith~D. Sally, \emph{On the associated graded ring of a local
  {C}ohen-{M}acaulay ring}, Journal of Mathematics of Kyoto University
  \textbf{17} (1977), no.~1, 19--21.

\bibitem{Junzo}
Junzo Watanabe, \emph{m-full ideals}, Nagoya Mathematical Journal \textbf{106}
  (1987), 101--111.

\end{thebibliography}
\bibliographystyle{amsplain}

\end{document}